\newcommand{\dd}{\,{\rm d}}
\newcommand\R{{\mathbb{R}}}
\renewcommand\P{{\mathbb{P}}}
\renewcommand\div{{\rm div}}
\newtheorem{theorem}{Theorem}[section]
\newtheorem{proposition}[theorem]{Proposition}
\newtheorem{lemma}[theorem]{Lemma}
\theoremstyle{definition}
\theoremstyle{remark}
\newtheorem{remark}[theorem]{Remark}
\numberwithin{equation}{section}
\begin{document}

\title[(\today)]
{Uniqueness Theorems for the Boussinesq System}

\author{Lorenzo Brandolese}
 
 \address{L. Brandolese: Universit\'e de Lyon, Universit\'e Lyon 1.
 CNRS - Institut Camille Jordan,
 43 bd. du 11 novembre,
 Villeurbanne Cedex F-69622, France.}
 \email{brandolese{@}math.univ-lyon1.fr}
\urladdr{http://math.univ-lyon1.fr/$\sim$brandolese}

\author{Jiao He}

\address{J. He Universit\'e de Lyon~; Universit\'e Lyon 1~;
 CNRS - Institut Camille Jordan,
 43 bd. du 11 novembre,
 Villeurbanne Cedex F-69622, France.}
 \email{he{@}math.univ-lyon1.fr}

\subjclass[2000]{Primary 76D05; Secondary 35B40}

\date{\today - Lorenzo Brandolese, Jiao He}

\begin{abstract}
We address the uniqueness problem for mild solutions of the Boussinesq system in $\R^3$. We provide several uniqueness classes on the velocity and the temperature, generalizing in this way the classical $C([0,T]; L^3(\R^3))$-uniqueness result for mild solutions of the Navier-Stokes equations.
\end{abstract}

\keywords{Boussinesq, Uniqueness, Navier--Stokes, Besov}

\maketitle

%%%%%%%%%%%%%%%%%%%%%%%%%%%%%%%
%%%%%%%%%%%%%%%%%%%%%%%%%%%%%%%
%%%%%%%%%%%%%%%%%%%%%%%%%%%%%%%

\section{Introduction}
\label{sec:intro}

The incompressible Boussinesq system describes the dynamics of a viscous incompressble fluid with heat exchanges. This system arises from an approximation on a system coupling the classical Navier-Stokes equations and the equations of thermodynamics. In this approximation, the variations of the density due to heat transfers are neglected in the continuity equation, but are taken into account in the equation of the motion through an additional buoyancy term proportional to the temperature variations.

This paper deals with the uniqueness problems for mild solutions of the Boussinesq system.
With a minor loss of generality, and just to simplify the presentation, we will assume in the sequel that the physical constant are all equal to one.
In this case, the Boussinesq system can be written as the following form,  

\begin{equation}
\label{B} 
\left\{
\begin{aligned}
 &\partial_t \theta +u \cdot \nabla \theta =  \Delta \theta\\
 &\partial_t u +u\cdot\nabla u+\nabla p=\Delta u+\theta e_3\\
  &\nabla\cdot u=0\\
 &u|_{t=0}=u_0,\;\,\theta|_{t=0}=\theta_0.
\end{aligned}
\right.
\qquad x\in \R^3, t\in \R_{+}
\end{equation}
Here $u\colon\R^3\times\R^+\to \R^3$ is the velocity field. The scalar fields 
$p\colon \R^3\times\R^+\to \R$ and $\theta\colon\R^3\times\R^+\to\R$ denote respectively 
the pressure and the  temperature of the fluid. Moreover, $e_3=(0,0,1)$ is the unit vertical vector.

In the case $\theta\equiv0$, this system reduces to the classical Navier--Stokes equations.

The integral formulation of the Boussinesq system reads:
\begin{equation}
 \label{IE}
\left\{
\begin{aligned}
 &\theta(t)=e^{t\Delta}\theta_0-\int_0^t e^{(t-s)\Delta}\nabla \cdot(\theta u)(s)\dd s\\
 &u(t)=e^{t\Delta}u_0-\int_0^t e^{(t-s)\Delta}\P\nabla \cdot(u\otimes u)(s)\dd s
+\int_0^t e^{(t-s)\Delta}\P \theta(s) e_3\dd s.\\
&\nabla \cdot u_0=0
\end{aligned}
\right.
\end{equation}
Here $\mathbb{P}$ denotes the projector on the space of divergence-free fields, which is also called Leray’s projector.
In this paper, we will work directly with the integral form \eqref{IE} rather than the original system \eqref{B}. The solutions of \eqref{IE} are usually called \emph{mild solutions}. The equivalence between the two systems is not only formal, but can be established rigorously in quite general functional settings. We refer to the book of Lemari{\'e}-Rieusset (see Theorem 1.2 in \cite{Lem02}), for this issue in the particular case of the Navier--Stokes equations.

To write our system in a more compact form,
we can replace the equation of~$\theta$ inside 
the last integral and we get
\[
\int_0^t e^{(t-s)\Delta}\P \theta(s) e_3\dd s
=t\,e^{t\Delta}\P \theta_0e_3
-\int_0^t e^{(t-s)\Delta}(t-s)\P\nabla\cdot(\theta u)(s)\dd s\,\,e_3.
\]
Next, let us introduce the three bilinear maps
\begin{subequations}
\begin{align}
\label{B1}
 &B_1(u,\tilde u)=-\int_0^t e^{(t-s)\Delta}\P\nabla\cdot(u\otimes \tilde u)(s)\dd s,\\
 \label{B2}
 &B_2(u,\theta)=-\biggl(\int_0^t e^{(t-s)\Delta}(t-s)\P\nabla\cdot(u\theta)(s)\dd s\biggr)e_3,\\
 \label{B3}
 &B_3(u,\theta)=-\int_0^t e^{(t-s)\Delta}\nabla\cdot(u\theta )(s)\dd s,
\end{align}
\end{subequations}

Then our system~\eqref{IE} can be rewritten as

\begin{equation}
 \label{IEE}
\left\{
\begin{aligned}
 &u(t)=e^{t\Delta}[u_0+ t\P\theta_0 e_3]+B_1(u,u)+B_2(u,\theta),\\
 &\theta(t)=e^{t\Delta}\theta_0+B_3(u,\theta)\\
 &\nabla \cdot u_0=0
\end{aligned}
\right.
\end{equation}
This system  is left invariant by the natural scaling $(u,\theta)\mapsto (u_\lambda,\theta_\lambda)$,
with $\lambda>0$ and
\[
u_\lambda(x,t)=\lambda u(\lambda x,\lambda^2 t),\qquad\text{and}\qquad 
\theta_\lambda=\lambda^3\theta(\lambda x,\lambda^2t),\]
and with the initial data transformation
$u_{0,\lambda}(x)=\lambda u_0(\lambda x)$ and $\theta_\lambda(x)=\lambda^3\theta_0(\lambda x)$.
Notice that,
\[
\|u_{0,\lambda}\|_3=\|u_0\|_3\qquad\text{and}\qquad\|\theta_{0,\lambda}\|_1=\|\theta_0\|_1,
\]
where $\|\cdot\|_p$ denotes the $L^p$-norm,
and these scaling relations motivate the choice of the space
\begin{equation}
\label{uniclass}
C([0,T],L^3(\R^3))\times C([0,T], L^1(\R^3))
\end{equation}
for solving the Boussinesq equations.
The unboundedness of the bilinear operator $B_1$ in $C([0,T],L(\R^3))$ leads to construct solutions
of~\eqref{IEE} applying the usual fixed point not directly in the space~\eqref{uniclass}, but in a Kato's-type smaller space, respecting the same scaling properties as in~\eqref{uniclass}.
For this reason, let us denote by $X$  the subspace of $C([0,T],L^3(\R^3))$, normed by
\begin{align}
 & \|u\|_{X}\equiv \sup_{t\in[0,T]}\|u(t)\|_3+\sup_{0<t\le T}\sqrt t\|u(t)\|_\infty,
\end{align}
and consisting of all divergence-free
vector fields in $C([0,T],L^3(\R^3))$ such that $ \|u\|_X<\infty$ and
$\lim_{t\to0}\sqrt t\|u(t)\|_\infty=0$.
Similarly, let us denote by $Y$ the subspace of  $C([0,T],L^1(\R^3))$, normed by
\begin{align}
 & \|\theta\|_{Y}=\sup_{t\in[0,T]}\|\theta(t)\|_1 + \sup_{0<t\le T} t^{3/2} \|\theta(t)\|_\infty,
 \end{align}
 and consisting of all functions $\|\theta\|_Y<\infty$ and 
 $\lim_{t\to0}t^{3/2}\|\theta(t)\|_\infty=0$.
Then, when $u_0\in L^3(\R^3)$ is divergence-free and $\theta_0\in L^1(\R^3)$, it is easy to establish, just by suitably adapting classical Kato's method~\cite{Kat84} for the Navier--Stokes equations, the following basic existence and uniqueness result in the space $X\times Y$:

\begin{proposition}
\label{th:exi}
Let $u_0\in L^3(\R^3)$ be a divergence-free vector field and 
let $\theta_0\in L^1(\R^3)$.
Then there exists $T>0$ and a unique mild solution $(u,\theta)\in X\times Y$ of~\eqref{IE}.
\end{proposition}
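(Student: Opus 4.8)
The plan is to write~\eqref{IEE} as a fixed-point problem $(u,\theta)=\Psi(u,\theta)$, where $\Psi$ sends $(u,\theta)$ to the right-hand side of~\eqref{IEE}, and to solve it by the contraction mapping principle in the Banach space $\mathcal{E}_T=X\times Y$ with $\|(u,\theta)\|_{\mathcal{E}_T}=\|u\|_X+\|\theta\|_Y$. The point of the formulation~\eqref{IEE}, obtained by inserting the $\theta$-equation into the buoyancy integral, is that the nonlinearity $(u,\theta)\mapsto\bigl(B_1(u,u)+B_2(u,\theta),\,B_3(u,\theta)\bigr)$ is genuinely \emph{bilinear}; the coupling with the temperature survives only as the extra \emph{linear} datum $t\,e^{t\Delta}\P\theta_0 e_3$ added to $e^{t\Delta}u_0$. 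I will write $\mathcal{N}_X(u)=\sup_{0<t\le T}\sqrt t\,\|u(t)\|_\infty$ and $\mathcal{N}_Y(\theta)=\sup_{0<t\le T}t^{3/2}\|\theta(t)\|_\infty$ for the ``Kato parts'' of the two norms.

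\emph{Linear part.} First I would check that the free evolution $\bigl(e^{t\Delta}[u_0+t\P\theta_0 e_3],\ e^{t\Delta}\theta_0\bigr)$ belongs to $\mathcal{E}_T$ with norm $\le c(\|u_0\|_3+\|\theta_0\|_1)$ \emph{uniformly} in $T$, using the smoothing bounds $\|e^{t\Delta}f\|_q\le c\,t^{-\frac32(\frac1p-\frac1q)}\|f\|_p$ and the companion estimate $\|e^{t\Delta}\P f\|_q\le c\,t^{-\frac32(\frac1p-\frac1q)}\|f\|_p$, valid as long as the target exponent $q>1$ (the kernel of $e^{t\Delta}\P$ behaves like $(\sqrt t+|x|)^{-3}$, hence lies in $L^q(\R^3)$ for every $q>1$). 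The decisive refinement is that $\mathcal{N}_X$ of the free velocity and $\mathcal{N}_Y$ of the free temperature---and likewise the $L^3$, resp.\ $L^1$, norm of the buoyancy datum---tend to $0$ as $T\to0$; this is the usual density argument: split $u_0$ and $\theta_0$ into a function of $C_c^\infty(\R^3)$ plus a remainder of small $L^3$ (resp.\ $L^1$) norm, estimate the remainder through the scaling of the smoothing bounds and the smooth part by hand, using that $\P(g e_3)\in L^3\cap L^\infty$ when $g\in C_c^\infty$. Strong continuity of the heat semigroup on $L^p$, $p<\infty$, gives continuity in time, including at $t=0$.

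\emph{Bilinear estimates.} The technical heart is to show that $B_1\colon X\times X\to X$, $B_2\colon X\times Y\to X$ and $B_3\colon X\times Y\to Y$ are bounded with norms independent of $T$, and, more precisely, that each can be estimated with at least one of its arguments entering only through its Kato part: schematically, the symmetric ``mixed'' bounds $\|B_1(v,w)\|_X\le c\,\mathcal{N}_X(v)\|w\|_X$ and $\|B_1(v,w)\|_X\le c\,\|v\|_X\mathcal{N}_X(w)$, a ``diagonal'' bound $\|B_1(u,u)\|_X\le c\,\|u\|_X^{\,3/q}\mathcal{N}_X(u)^{\,2-3/q}$ for any $q>3$, and the analogues for $B_2,B_3$ (with $\mathcal{N}_Y$ in the temperature slot, and, for $B_3$, the weight $t^{3/2}$ of $Y$ taken into account). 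These follow from the smoothing bounds applied to $e^{(t-s)\Delta}\P\nabla\cdot$ (the extra factor $t-s$ in $B_2$ only helping), from Hölder's inequality putting $u\otimes\tilde u$, resp.\ $u\theta$, in an intermediate Lebesgue space, from the interpolation inequalities $\|u(s)\|_q\le c\,s^{-\frac12(1-\frac3q)}\|u\|_X$ for $q\ge3$ and $\|\theta(s)\|_q\le c\,s^{-\frac32(1-\frac1q)}\|\theta\|_Y$ for $q\ge1$, combined with $\|u(s)\|_\infty\le s^{-1/2}\mathcal{N}_X(u)$ and $\|\theta(s)\|_\infty\le s^{-3/2}\mathcal{N}_Y(\theta)$ on the factor whose Kato part one wants to extract, and from a Beta-function evaluation of the resulting time integral $\int_0^t(t-s)^{-a}s^{-b}\,\dd s$; the intermediate exponent is chosen so that $a<1$ and $b<1$ hold simultaneously, if needed after splitting the integral over $[0,t/2]$ and $[t/2,t]$ with a different exponent on each half. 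The same computations show the bilinear integrals converge as Bochner integrals in the respective spaces, so $\Psi$ is well defined on $\mathcal{E}_T$.

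\emph{Fixed point and uniqueness.} Given the data, fix $R\approx c(\|u_0\|_3+\|\theta_0\|_1)$, and then choose $T$ so small that, by the linear part, $\mathcal{N}_X(u_{\mathrm{free}})+\mathcal{N}_Y(\theta_{\mathrm{free}})\le\eta$ with $\eta$ as small as required. On the complete metric space $\bigl\{(u,\theta)\in\mathcal{E}_T:\|(u,\theta)\|_{\mathcal{E}_T}\le 2R,\ \mathcal{N}_X(u)+\mathcal{N}_Y(\theta)\le 2\eta\bigr\}$ the bilinear estimates show that $\Psi$ maps the set into itself---the mechanism being a bootstrap on the Kato parts alone, $\mathcal{N}_X(u)\le\eta+c\,\|u\|_X^{3/q}\mathcal{N}_X(u)^{2-3/q}+\cdots$, which stays of size $O(\eta)$ because the net power of the Kato parts exceeds $1$, even though $\|u_{\mathrm{free}}\|_X$ is merely bounded---and that $\Psi$ is a contraction with constant $O(\eta)<1$, since each term of the difference $\Psi(u,\theta)-\Psi(\tilde u,\tilde\theta)$ is linear in the difference and carries a Kato factor of a ball element. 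Banach's theorem then yields the solution $(u,\theta)\in\mathcal{E}_T$. For uniqueness in the whole of $X\times Y$, the difference $(w,\sigma)$ of two solutions with the same data solves a purely bilinear identity (no free term), so on a short interval $[0,\tau]$ the estimates above give $\|(w,\sigma)\|_{\mathcal{E}_\tau}\le c\,o(1)\,\|(w,\sigma)\|_{\mathcal{E}_\tau}$, with $o(1)=\sup_{0<t\le\tau}\sqrt t\,\|u(t)\|_\infty+\sup_{0<t\le\tau}t^{3/2}\|\theta(t)\|_\infty\to0$ built into the definitions of $X$ and $Y$; hence $(w,\sigma)\equiv0$ on $[0,\tau]$, and a standard continuous-induction argument (restarting from the last time of coincidence, where the analogous shifted semi-norms are again small since $\sqrt{t-t_0}\,\|u(t)\|_\infty\to0$ as $t\to t_0^+$ for $t_0>0$) propagates equality up to $T$. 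The only genuinely new point compared with Kato's treatment of Navier--Stokes is the bookkeeping of these bilinear estimates: verifying that $B_2$, with its $(t-s)$ weight and mixed $X$--$Y$ inputs, and $B_3$, mapping into the $L^1$-based space $Y$ with its faster decay weight, still obey bounds of exactly the above type---equivalently, that the Hölder and interpolation choices keep $a<1$ and $b<1$ in the time integral, possibly after a dyadic splitting---everything else being the routine contraction-plus-continuation scheme.
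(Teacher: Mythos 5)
Your argument is correct, but it takes a genuinely different route from the paper. The paper does not contract in $X\times Y$ at all: it first proves the more general Theorem~\ref{theo:exi} by a fixed point in the auxiliary space $E=X_p\times Y_q$ with $p>3$, $3/2<q<3$, $\frac23<\frac1p+\frac1q$ (single Lebesgue exponents, time weights $t^{\frac12(1-3/p)}$ and $t^{\frac32(1-1/q)}$), where every bilinear bound is a one-line H\"older--Young computation (Lemma~\ref{lem:b123}) and the smallness comes from $\|{\bf v_0}\|_E\to0$ as $T\to0$ for data in the closure of the Schwartz class in the corresponding Besov spaces; it then bootstraps regularity to $X_\infty\times Y_\infty$, shows $u\in X_3$, $\theta\in Y_1$ and time-continuity when $(u_0,\theta_0)\in L^3\times L^1$, and finally obtains uniqueness in $X\times Y$ by observing that, by interpolation and the vanishing conditions built into $X$ and $Y$, any two such solutions have $E$-norm below the radius $R$ of the fixed-point ball on a short interval, hence coincide, plus continuation. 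You instead run Kato's original two-seminorm scheme directly in $X\times Y$: mixed bilinear estimates in which at least one slot enters through the Kato seminorms $\mathcal{N}_X,\mathcal{N}_Y$ (with the $[0,t/2]\cup[t/2,t]$ splitting and fractional interpolation powers for the diagonal and for the $B_2,B_3$ terms), an invariant set defined by the two constraints $\|\cdot\|\le 2R$, $\mathcal{N}\le2\eta$, contraction constant $O(\eta)$, and uniqueness in all of $X\times Y$ from the vanishing of the shifted Kato seminorms. Both routes are sound; I checked that your exponent bookkeeping for $B_2$ (with the $(t-s)$ gain) and $B_3$ (into the $L^1$-based space) does close with net Kato power exceeding one, which is the one place where your scheme could have failed. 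What the paper's detour buys is the stronger Theorem~\ref{theo:exi} with Besov data (needed later for Theorem~\ref{th:uni2}) and much lighter bilinear estimates, at the price of a bootstrap and a separate continuity argument; what your direct approach buys is a self-contained proof of Proposition~\ref{th:exi} avoiding Besov spaces and the regularity bootstrap, at the price of heavier estimate bookkeeping. The only points you leave implicit---time-continuity of the Duhamel terms in $L^3$ and $L^1$, and the translation property used in the continuation step---are routine and are also only sketched in the paper.
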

The above solution is global-in-time when, e.g., $\|u_0\|_3+\|\theta_0\|_1$ is small enough.
We refer in this case to \cites{BraM17, BraS12} for the study of their long time behavior, which strikingly differs from the usual behavior as $t\to+\infty$ of solutions of the Navier--Stokes equations.

One can establish several variants of Proposition~\ref{th:exi}. 
For example, we will state a much more general local existence result in Theorem~\ref{theo:exi},
where the $L^3(\R^3)$ and $L^1(\R^3)$ will be replaced by considerably larger Besov spaces.

The main drawback of Proposition~\ref{th:exi}
is that \emph{the uniqueness of the solution is not ensured
in the natural class~\eqref{uniclass}}, but only in the considerably smaller class $X\times Y$.
In this sense, the uniqueness result of the above theorem looks far from being optimal.

In fact, in the case of the Navier--Stokes equations, {\it i.e.\/} when $\theta\equiv0$, Kato's existence result of solutions in $C([0,T],L^3(\R^3))$ is completed by
the well-known uniqueness theorem of Furioli, Lemari\'e-Rieusset, Terraneo~\cite{FurLT00}, stating that, for $u_0\in L^3(\R^3)$,
there is \emph{only one  mild solution} of the Navier--Stokes equations in $C([0,T],L^3(\R^3))$, such that $u(0)=u_0$. See also \cites{Mey97, Mon} for simpler proofs of this important result.
Unfortunately, in the case of the Boussinesq system, it seems difficult
to establish the uniqueness of mild solutions in the natural class~\eqref{uniclass}. 
Indeed, no specific regularity result is available for solutions in such class: if we put no additional condition on the regularity of $u$ or $\theta$ then
the term $\theta u$ appearing in the equation of the temperature is not even a distribution, so that
giving a sense to the term $B_3(u,\theta)$ would be problematic. 

The purpose of this paper is to put in evidence alternative uniqueness classes for the
solutions of the Boussinesq equations.
In this direction, our first main uniqueness result is the following theorem:

\begin{theorem}[Uniqueness]
\label{th:uni2}
Let $T>0$, $u_0\in L^3(\R^3)$ and $\theta_0\in L^1(\R^3)$, with $\nabla\cdot u_0=0$.
Let $(u,\theta)$, $(\tilde u,\tilde \theta)$ be two mild solutions of the Boussinesq system~\eqref{IE} with the same data $(u_0,\theta_0)$, such that
\begin{equation}
 \label{unispa}
 u,\tilde u\in C([0,T],L^3(\R^3)),\qquad\text{and}\qquad
 \theta,\tilde\theta\in  C([0,T],L^1(\R^3))\cap L^\infty_{\rm loc}((0,T),L^{q,\infty}(\R^3)).
\end{equation}
for some $q>3/2$.
Then, $(u,\theta)=(\tilde u,\tilde \theta)$.
\end{theorem}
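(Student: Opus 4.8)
The plan is the standard one for mild uniqueness. Write $w=u-\tilde u$ and $\eta=\theta-\tilde\theta$; by bilinearity of $B_1,B_2,B_3$ the pair $(w,\eta)$ solves the closed \emph{linear} integral system
\begin{align*}
 w &= B_1(w,u)+B_1(\tilde u,w)+B_2(w,\theta)+B_2(\tilde u,\eta),\\
 \eta &= B_3(w,\theta)+B_3(\tilde u,\eta),
\end{align*}
with $w(0)=0$ and $\eta(0)=0$. I would introduce, on a short interval $[0,\tau]$, scaling-invariant auxiliary norms: for the velocity a norm built from the homogeneous Besov spaces $\dot B^{-(1-3/p)}_{p,\infty}(\R^3)$ ($p\in(3,\infty)$) in the style of \cite{FurLT00}, and for the temperature the norm $\sup_{0<t\le\tau}\|\eta(t)\|_1+\sup_{0<t\le\tau}t^{1/2}\|\eta(t)\|_{3/2}$. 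The goal is to prove that the linear map $\Phi$ given by the right-hand side of the system has operator norm at most $\tfrac12$ on the corresponding product space once $\tau$ is small enough. Since $(w,\eta)$ is a fixed point of the linear map $\Phi$, this forces $(w,\eta)\equiv0$ on $[0,\tau]$, and a connectedness argument then upgrades this to all of $[0,T]$.

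On the velocity side, the only genuinely hard term is the \emph{critical} part of $B_1$: with target $L^3(\R^3)$ and both arguments only in $L^3(\R^3)$, the heat kernel produces the non-integrable bound $(t-s)^{-1}$, so no contraction can be run in $C([0,T],L^3(\R^3))$ directly. This is exactly the obstruction resolved by Furioli, Lemari\'e-Rieusset and Terraneo \cite{FurLT00} (see also \cite{Mey97, Mon}) by passing to the larger Besov scale $\dot B^{-(1-3/p)}_{p,\infty}(\R^3)$, into which $L^3(\R^3)$ embeds, together with paraproduct estimates; I would import that argument essentially verbatim for $B_1(w,u)$ and $B_1(\tilde u,w)$. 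After splitting $u$ and $\tilde u$ into (a part small in $C([0,\tau],L^3(\R^3))$) $+$ (a part bounded in $L^\infty$) — possible by strong $L^3$-continuity of $u,\tilde u$, and, for the difference $w$, by $w(0)=0$ — all the remaining velocity contributions are handled by elementary heat-kernel estimates producing a positive power of $\tau$, hence smallness on short intervals.

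On the temperature side, the hypothesis $\theta,\tilde\theta\in L^\infty_{\rm loc}((0,T),L^{q,\infty}(\R^3))$ with $q>3/2$ plays two roles. First, H\"older in Lorentz spaces makes $u\theta$ (and $\tilde u\tilde\theta$) a genuine function, in $L^{r,\infty}(\R^3)$ with $\tfrac1r=\tfrac13+\tfrac1q<1$, so $B_3$ is a convergent integral, and the smoothing $e^{(t-s)\Delta}\nabla\cdot\colon L^{r,\infty}\to L^{3/2}$ yields the time weight $(t-s)^{-3/(2q)}$, which is \emph{integrable} precisely because $q>3/2$. Second, inserting this back into the integral equation for $\theta$, and using $\theta\in C([0,T],L^1(\R^3))$ with $\theta(0)=\theta_0\in L^1(\R^3)$, one upgrades the merely local-in-$(0,T)$ control on $\|\theta(t)\|_{q,\infty}$ to the scaling-natural rate $\|\theta(t)\|_{q,\infty}\lesssim t^{-(3/2)(1-1/q)}$ on a full interval $(0,\tau]$ — in particular $\|\theta(t)\|_{3/2}\lesssim t^{-1/2}$ — and similarly for $\tilde\theta$. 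With these bounds: $B_3(w,\theta)$ and $B_2(w,\theta)$ are small because $\sup_{0\le t\le\tau}\|w(t)\|_3\to0$ as $\tau\to0$; and $B_3(\tilde u,\eta)$, $B_2(\tilde u,\eta)$ are made small by the same splitting of $\tilde u$ (small part treated critically, bounded part yielding a power of $\tau$), the extra factor $(t-s)$ in $B_2$ and the inequality $3/(2q)<1$ leaving room to spare.

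Combining the two sides gives $\|\Phi\|\le\tfrac12$ on $[0,\tau]$ for $\tau$ small, hence $(w,\eta)\equiv0$ on $[0,\tau]$. To conclude, the set $S=\{t\in[0,T]:(u,\theta)=(\tilde u,\tilde\theta)\ \text{on}\ [0,t]\}$ is non-empty, closed (continuity of the four fields in $L^3$, resp. $L^1$), and relatively open (translating the time origin to any $t_*\in S$ produces two mild solutions with identical data, to which the local argument applies on a further short interval); being open and closed in $[0,T]$, it equals $[0,T]$. The step I expect to be the main obstacle is the second half of the temperature analysis: extracting the correct near-origin rate of $\|\theta(t)\|_{q,\infty}$ purely from the mild formulation — without it, $u\theta$ is not controlled near $t=0$ and no $B_3$-estimate can even be started — and organizing the argument so that this Lorentz-space bookkeeping for $\theta$ is compatible with the Besov-space bookkeeping that the $C([0,T],L^3(\R^3))$-setting forces on $u$. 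The $B_1$-estimate itself, although historically the delicate point, can simply be quoted from \cite{FurLT00, Mey97, Mon}.
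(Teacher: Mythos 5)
There is a genuine gap, and it sits exactly where you flag it yourself: the ``upgrade'' of the hypothesis $\theta\in C([0,T],L^1)\cap L^\infty_{\rm loc}((0,T),L^{q,\infty})$ to the scaling rate $t^{\frac32(1-\frac1q)}\|\theta(t)\|_{L^{q,\infty}}\lesssim 1$ near $t=0$ is asserted (``inserting this back into the integral equation'') but cannot be obtained that way. Near $t=0$ the only quantitative information is $u\in C([0,T],L^3)$ and $\theta\in C([0,T],L^1)$; the local-in-$(0,T)$ bound on $\|\theta(t)\|_{L^{q,\infty}}$ may blow up arbitrarily fast as $t\to0^+$, and a direct bootstrap on $\theta(t)=e^{t\Delta}\theta_0+B_3(u,\theta)$ does not close: with $u\theta$ controlled only in $L^{3/4}$ the heat-kernel smoothing back to $L^1$ (or to weighted $L^{q,\infty}$) produces the non-integrable factor $(t-s)^{-1}$, so no Gronwall-type argument starting from the mild formulation alone yields the rate. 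This is precisely the point to which the paper devotes most of its machinery: an intermediate uniqueness theorem in the class $C([0,T],L^3)\times Y_{q,\infty}$ (Theorem~\ref{th:uni}, proved with Meyer-type bilinear estimates in $L^{3,\infty}$ and $L^{q,\infty}$, Lemma~\ref{lem:b3}), a refined existence theory giving a canonical solution semigroup $(R(t),S(t))$ with decay rates (Theorem~\ref{theo:exi}, Proposition~\ref{th:exi}), a Ben-Artzi-type uniform decay over precompact sets of $L^1$ data (Remark~\ref{rem:time}, after \cite{BenA94}), and a Brezis-style argument \cite{Bre94}: restart the given solution at a positive time $s$, where the shifted temperature trivially lies in $Y_{q,\infty}$, identify it with $(R(t)u(s),S(t)\theta(s))$ via Theorem~\ref{th:uni}, then use precompactness of $K=\theta((0,T])$ in $L^1$ to get $\sup_{s} t^{\sigma}\|S(t)\theta(s)\|_q\to0$ and let $s\to0$. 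Note also that even the rate $O(t^{-\sigma})$ you aim for would not suffice: to absorb the terms $B_2(w,\theta)$, $B_3(w,\theta)$ in the contraction you need \emph{smallness} of $\sup_{0<t\le\tau}t^{\sigma}\|\theta(t)\|_{L^{q,\infty}}$ as $\tau\to0$ (this vanishing is built into the definition of $Y_{q,\infty}$ and is what the compactness argument actually delivers), not just boundedness.

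Two secondary remarks. First, your temperature norm $\sup_t\|\eta(t)\|_1+\sup_t t^{1/2}\|\eta(t)\|_{3/2}$ sits at the endpoint $q=3/2$, which is exactly the excluded case where the paper's estimates break down (e.g.\ the weak bilinear estimates require $3/2<q<3$ strictly); with $\eta$ measured in $L^{3/2}$ and $\tilde u$ only in $L^3$ several of the needed kernel integrals are critical or produce constants with no smallness, so you should work with $L^{q,\infty}$, $3/2<q<3$, as the paper does. Second, treating the critical $B_1$ terms by importing the Besov/paraproduct argument of \cite{FurLT00} is a legitimate alternative to the paper's route (which instead quotes Meyer's weak-$L^3$ estimate \cite{Mey97} and runs the whole difference argument in $L^{3,\infty}\times L^{q,\infty}$); but you would then have to make the Besov bookkeeping for $w$ compatible with the Lorentz bookkeeping forced by $B_2$ and $B_3$, which is an additional burden the paper's homogeneous Lorentz framework avoids. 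None of this, however, repairs the main gap: without the restart-plus-compactness step (or some substitute for it), the hypothesis of Theorem~\ref{th:uni2} gives no control of $\theta$ near $t=0$ in any norm stronger than $L^1$, and the contraction cannot even be set up.
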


Theorem~\ref{th:uni2} ensures that the uniqueness holds in a space considerably larger than $X\times Y$. In particular, the vanishing of the $L^{q,\infty}$-norm of $\theta(t)$ as $t\to0$ is not required for the uniqueness.

Let us recall that, if $\sigma>0$ and $1\le q\le\infty$, then a
tempered distribution $f$ satisfies
\begin{equation}
\label{besoc}
\sup_{0<t<T} t^{\sigma/2} \|e^{t\Delta} f\|_q<\infty
\end{equation}
for all $0<T<\infty$ if and only if $f\in B^{-\sigma}_{q,\infty}(\R^3)$. For different values of $T$, all these
expressions are equivalent to the usual inhomogeneous Besov norm $\|\cdot\|_{B^{-\sigma}_{q,\infty}}$.
If \eqref{besoc} holds with $T=\infty$, then $f$ belongs  to the smaller homogeneous
Besov space $\dot B^{-\sigma}_{q,\infty}(\R^3)$ and the converse is also true.
See \cite{Lem02}.
By analogy, we define $B^{-\sigma}_{q,\infty,\infty}(\R^3)$ as the space of tempered distributions $f$ such that,
for some $T>0$, $\sup_{0<t<T} t^{\sigma/2}\|e^{t\Delta}f\|_{L^{q,\infty}}<\infty$.
Before stating our next theorem, let us observe that the 
the solution obtained in Proposition~\ref{th:exi}
satisfies, for all $1<q<\infty$, by interpolation,
\begin{align}
\label{normY}
\sup_{0<t<T}t^{\frac32(1-\frac1q)}\|\theta(t)\|_{L^{q,\infty}}<\infty,\qquad\text{and}\qquad
\lim_{t\to0}t^{\frac32(1-\frac1q)}\|\theta(t)\|_{L^{q,\infty}}=0.
\end{align}
In the sequel, we will denote by $Y_{q,\infty}$ the subspace of $L^\infty_{\rm loc}((0,T),L^{q,\infty}(\R^3))$
made of functions satisfying~\eqref{normY}.
This space is equipped with the natural norm
\[
\|\theta\|_{Y_{q,\infty}}\equiv\sup_{0<t<T}t^{\frac32(1-\frac1q)}\|\theta(t)\|_{L^{q,\infty}}.
\]

When restricting to temperatures in $Y_{q,\infty}$ the uniqueness can be granted as soon as the velocity is in $C([0,T],L^3(\R^3))$, in this case it is no longer needed  to require that $\theta$ belongs to $C([0,T],L^1(\R^3))$.
More precisely we have the following variant of our uniqueness result:

\begin{theorem}
\label{th:uni}
Let $T>0$ and $(u,\theta)$ a mild solution of the Boussinesq system~\eqref{IE}, such that
\begin{equation}
 \label{unispa2}
 (u,\theta)\in C\bigl([0,T],L^3(\R^3)\bigr)\,\times\,\,Y_{q,\infty},
\end{equation}
for some $3/2<q<3$.
Then the data $(u_0,\theta_0)$ belong to $L^3(\R^3)\times  B^{-3(1-1/q)}_{q,\infty,\infty}$
and  uniquely determine $(u,\theta)$.
\end{theorem}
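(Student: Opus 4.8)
The plan is to proceed in three steps: first upgrade the a priori regularity of the solution to Kato's class $X$, then read off the initial data from the integral equation, and finally run a difference argument on a short time interval and propagate it. For the first step, I note that since $u\in C([0,T],L^3)$, letting $t\to0$ in the Duhamel formula for $u$ in~\eqref{IE} (the two integral contributions converge to $0$ in $\mathcal S'$, the second one because $\|\theta(s)\|_{L^{q,\infty}}\lesssim s^{-\frac32(1-1/q)}$ is integrable near $0$) gives $u_0=u(0)\in L^3(\R^3)$. I would then claim that $u\in X$ on a possibly shorter interval, hence, by parabolic smoothing of mild $L^3$-solutions and iteration in time, on all of $[0,T]$: indeed $e^{t\Delta}u_0\in X$ since $u_0\in L^3$; a Lorentz-space estimate of the heat semigroup shows $B_2(u,\theta)\in X$ as soon as $u\in C([0,T],L^3)$ and $\theta\in Y_{q,\infty}$ --- and here it is exactly the vanishing $\lim_{t\to0}t^{\frac32(1-1/q)}\|\theta(t)\|_{L^{q,\infty}}=0$ built into the definition of $Y_{q,\infty}$ that forces $\lim_{t\to0}\sqrt t\,\|B_2(u,\theta)(t)\|_\infty=0$; while the Navier--Stokes term $B_1(u,u)$ is handled by the argument of Furioli, Lemari\'e-Rieusset and Terraneo~\cite{FurLT00} for mild $L^3$-solutions (splitting $u_0$ into a small $L^3$-piece and a bounded piece, plus a continuity/bootstrap in the $X$-norm). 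This first step is the heart of the matter: the $B_1$-term genuinely forces the splitting/continuity argument, and checking that the buoyancy term $B_2(u,\theta)$ does not spoil the gain is precisely where the hypothesis $q<3$ and the vanishing condition in $Y_{q,\infty}$ enter.

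For the second step, Duhamel gives $e^{t\Delta}\theta_0=\theta(t)-B_3(u,\theta)(t)$, so by the $L^{q,\infty}$-variant of the characterization~\eqref{besoc} of Besov spaces it suffices to bound $t^{\frac32(1-1/q)}\|B_3(u,\theta)(t)\|_{L^{q,\infty}}$ uniformly on $(0,T)$. Using $u\in X$, I would interpolate $\|u(s)\|_p\lesssim\|u(s)\|_3^{3/p}\bigl(s^{-1/2}\sup_{0<\sigma<T}\sqrt\sigma\,\|u(\sigma)\|_\infty\bigr)^{1-3/p}$ with $3<p$ close to $3$, estimate $u\theta$ in $L^{m,\infty}$ with $\tfrac1m=\tfrac1p+\tfrac1q$, and apply $\|e^{\tau\Delta}\nabla\cdot h\|_{L^{q,\infty}}\lesssim\tau^{-\frac12-\frac{3}{2p}}\|h\|_{L^{m,\infty}}$: since $q<3$, for $p$ close enough to $3$ the resulting $s$-integral converges at both endpoints and produces precisely the power $t^{-\frac32(1-1/q)}$. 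Hence $\sup_{0<t<T}t^{\frac32(1-1/q)}\|e^{t\Delta}\theta_0\|_{L^{q,\infty}}<\infty$, i.e. $\theta_0\in B^{-3(1-1/q)}_{q,\infty,\infty}$, and with $u_0\in L^3$ this is the asserted membership of the data.

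For the last step, let $(u,\theta),(\tilde u,\tilde\theta)\in C([0,T],L^3)\times Y_{q,\infty}$ solve~\eqref{IE} with the same data; by the previous steps both lie in $X\times Y_{q,\infty}$. Set $v=u-\tilde u$, $\eta=\theta-\tilde\theta$; bilinearity yields
\[
v=B_1(v,u)+B_1(\tilde u,v)+B_2(v,\theta)+B_2(\tilde u,\eta),\qquad
\eta=B_3(v,\theta)+B_3(\tilde u,\eta),
\]
with no free-evolution term. On an interval $[0,\tau]$ each operator on the right has small operator norm on $X([0,\tau])\times Y_{q,\infty}([0,\tau])$ when $\tau$ is small: the $B_1$-contributions are controlled by $\sup_{0<t<\tau}\sqrt t\,(\|u(t)\|_\infty+\|\tilde u(t)\|_\infty)$ and the $B_2$- and $B_3$-contributions by $\sup_{0<t<\tau}t^{\frac32(1-1/q)}(\|\theta(t)\|_{L^{q,\infty}}+\|\tilde\theta(t)\|_{L^{q,\infty}})$ --- via the same estimates as above, with $u,\tilde u$ interpolated into $L^p$, $3<p$ close to $3$ --- and all four quantities tend to $0$ as $\tau\to0$ by the defining vanishing properties of $X$ and $Y_{q,\infty}$. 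Thus the affine map of which $(v,\eta)$ is a fixed point is a strict contraction on $X([0,\tau])\times Y_{q,\infty}([0,\tau])$ for $\tau$ small; since $(0,0)$ is also a fixed point and $\|v\|_{X([0,\tau])}+\|\eta\|_{Y_{q,\infty}([0,\tau])}\to0$ (so $(v,\eta)$ lies in the same small ball), $(v,\eta)\equiv0$ on $[0,\tau]$. The set of $t_1\in[0,T]$ for which $(v,\eta)\equiv0$ on $[0,t_1]$ is then nonempty, closed by continuity, and open --- one repeats the argument on $[t_1,T]$, where $u,\tilde u$ are again in Kato's class (first step, with origin $t_1$) and $v(t_1)=\eta(t_1)=0$ --- hence it equals $[0,T]$, so $(u,\theta)=(\tilde u,\tilde\theta)$. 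Apart from the first step, everything here reduces to routine, if lengthy, Lorentz-space convolution estimates; the first step is where I expect the real work to be.
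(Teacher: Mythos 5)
Your Step 1 contains the gap that sinks the argument. You claim that an arbitrary mild solution with $u\in C([0,T],L^3(\R^3))$ and $\theta\in Y_{q,\infty}$ can be upgraded to Kato's class $X$ (i.e.\ $\sqrt t\,\|u(t)\|_\infty\to0$) ``by parabolic smoothing and the argument of \cite{FurLT00}''. No such regularity upgrade is available by that route. The theorem of Furioli--Lemari\'e-Rieusset--Terraneo is a \emph{uniqueness} statement, proved by estimating the difference of two solutions in norms weaker than $L^3$ (homogeneous Besov, or, in Meyer's version \cite{Mey97}, $L^{3,\infty}$); its proof does not show that a given $C([0,T],L^3)$ mild solution lies in Kato's class, and the fact that it does is only an \emph{a posteriori} consequence of uniqueness (the solution is then identified with Kato's). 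A direct bootstrap fails precisely at the term you single out: with only $u\in L^\infty_t L^3_x$, the Duhamel term $B_1(u,u)(t)$ can be placed in $L^r$ only for $r<3$ (or in $L^{3,\infty}$ at the endpoint, via the subtle estimate \eqref{est:b1w}), with no gain of integrability, so one never reaches $\sqrt t\,\|u(t)\|_\infty<\infty$. The alternative of restarting at a time $t_0>0$ from $u(t_0)\in L^3$ and invoking the constructed Kato solution is circular: identifying it with the given solution on $[t_0,t_0+\delta]$ requires exactly the uniqueness in $C([t_0,t_0+\delta],L^3)\times Y_{q,\infty}$ that you are trying to prove. Moreover, here the velocity equation carries the extra buoyancy term, so even a hypothetical Navier--Stokes regularity result could not simply be quoted. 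Once Step 1 is granted, your Steps 2 and 3 are essentially routine (and Step 3 is then little more than the uniqueness part of Proposition~\ref{th:exi}), which confirms that all the real difficulty was hidden in the unproved Step 1.

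For contrast, the paper never upgrades the solutions: it performs the difference estimate directly in the weak space $X_{3,\infty}=L^\infty((0,T),L^{3,\infty})$, where $B_1$ \emph{is} bounded (Meyer's endpoint estimate \eqref{est:b1w}--\eqref{est:b1wbis}), together with the new endpoint bound of Lemma~\ref{lem:b3} for $B_3$ in $Y_{q,\infty}$ and the weak estimates \eqref{est:b2w}--\eqref{est:b3w} for $B_2$, $B_3$. Smallness of the coefficients is obtained not from any decay of $\sqrt t\,\|u(t)\|_\infty$ (which is unknown), but by subtracting the free evolution $v_0=e^{t\Delta}[u_0+t\P\theta_0e_3]$ and using the strong $L^3$-continuity of $u,\tilde u$ at $t=0$ to make $\|u-v_0\|_{X_{3,\infty}}$, $\|\tilde u-v_0\|_{X_{3,\infty}}$ small, the density of smooth data to make $\|v_0\|_{X_\infty}$ small, and the defining vanishing of $\|\theta\|_{Y_{q,\infty}}$, $\|\tilde\theta\|_{Y_{q,\infty}}$ as $T\to0$; this forces $w=\phi=0$ on a short interval, and a continuation argument concludes. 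Your Step 2 (membership of $\theta_0$ in $B^{-3(1-1/q)}_{q,\infty,\infty}$) is fine in spirit, and is in fact easier than you make it: it only needs $u\in X_{3,\infty}$, which follows trivially from $u\in C([0,T],L^3)$, plus Lemma~\ref{lem:b3} --- no recourse to Kato's class is needed there either.
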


In Section~\ref{sec:proofs} we prove Theorem~\ref{th:uni}, after establishing the relevant bilinear estimates in Lorentz spaces, extending those of~Y.~Meyer in~\cite{Mey97}.
The proof of Theorem~\ref{th:uni2} will rely on the result of Theorem~\ref{th:uni}. Next step consists in 
establishing some fine existence results of solutions, encompassing Proposition~\ref{th:exi},
in the same spirit as Cannone's~\cite{Can04}.
The last step of the proof of Theorem~\ref{th:uni2} consists in removing the restriction $\theta\in Y_{q,\infty}$: this will be done by proving that any mild solution in the class
$C([0,T],L^3(\R^3))\times C([0,T],L^1(\R^3))\cap L^\infty_{\rm loc}((0,T),L^{q,\infty}(\R^3))$,
must agree with the solution of Proposition~\ref{th:exi}. This last step makes use of a compactness argument inspired an earlier uniqueness theorem by H.Brezis on the vorticity equation~\cite{Bre94}.

Our estimates break down in the case $q=3/2$. For for this reason, we do not know, for example, if $C([0,T],L^3(\R^3))\times C([0,T],L^{3/2}(\R^3))$, or $C([0,T],L^3(\R^3))\times Y_{3/2, \infty}$
are uniqueness classes for mild solutions of the Boussinesq system.

On the other hand, in our uniqueness results, it is possible to relax a little bit the $C([0,T],L^3(\R^3))$-condition on the velocity, and to replace it by a weaker condition of the form
$u\in C([0,T],D)$, where $D$ is the closure in 
$L^{3,\infty}(\R^3)$ of $\{f\in L^{3,\infty}\colon -\Delta f\in L^{3,\infty}\}$.
See Remark~\ref{improl} below. In the case of the Navier--Stokes equations,
an even finer uniqueness result is contained in the recent preprint by T.Okabe and Y.Tsutsui~\cite{OkaT}.

While there exists a rich literature on the uniqueness of solutions of the Navier--Stokes equations,
(see, e.g., \cites{Che99, FurLT00, LioM, FarNT, OkaT} for a small sample of the available results), only few earlier papers dealt with the uniqueness problem for the Boussinesq equations
in scale-invariant spaces. Moreover, such papers study, in fact, more or less different versions of the original  system~\eqref{B} (a system with no diffusivity for the temperature in \cite{DanP08}, or a nonlinear diffusivity in \cite{Abi09}, etc.), so that the uniqueness results therein are not comparable to ours.

\section{Proof of the main theorems}
\label{sec:proofs}

\subsection{Preliminary estimates}
\label{sec:esti}

To establish Theorem~\ref{th:uni}, inspired by~\cites{KozY95, Mey97}, we will make use of the Banach space 
$X_{3,\infty}=L^\infty((0,T),L^{3,\infty})$, normed by
\[
\|u\|_{X_{3,\infty}}=\sup_{t\in(0,T)}\|u(t)\|_{L^{3,\infty}(\R^3)}.
\]
We will make use also of the space $X_p$, consisting of the subspace of~$L^1_{\rm loc}((0,T),L^p)$ made of
the vector fields $u$ such that $\|u\|_{X_p}<\infty$, where the $X_p$-norm is defined as
\[
\|u\|_{X_p} =\sup_{t\in(0,T)}t^{\frac{1}{2}(1-3/p)}\|u(t)\|_p, \qquad1\le p\le\infty.
\]
Of special importance will be the space the case $p=3$: in this case $X_3=L^\infty((0,T),L^3(R^3))$.
Concerning the temperature, we will often work in the space $Y_q$ of all the $L^1_{\rm loc}((0,T),L^q(\R^3))$
functions such that $\|\theta\|_{Y_q}<\infty$, where
\[
\|\theta\|_{Y_q} =\sup_{t\in(0,T)}t^{\frac{3}{2}(1-1/q)}\|\theta(t)\|_q, \qquad1\le q\le\infty.
\]
Notice that $Y_1=L^\infty((0,T),L^1(\R^3))$.

The kernel $K(x,t)$ of the operator $e^{t\Delta}\P$ satisfies
\begin{equation}
 \label{kerK}
 K(x,t)=t^{-3/2}K(\textstyle\frac{x}{\sqrt t},1), \qquad\hbox{and}\qquad 
|K(x,1)|\le C(1+|x|)^{-3}.
\end{equation}
In particular, $ K(\cdot,1)\in \bigcap_{1<p\le\infty} L^p(\R^3)$ and
\begin{equation}
 \label{lpK}
 \|e^{t\Delta}\P\theta_0e_3\|_s=Ct^{-\frac32(\frac1r-\frac1s)}\|\theta_0\|_r, \qquad 1\le r< s\le \infty.
\end{equation}
On the other hand, the kernel $F(x,t)$ of the operator $e^{t\Delta}\P\div$ satisfies
\begin{equation}
 \label{kerF}
 F(x,t)=t^{-2}F(\textstyle\frac{x}{\sqrt t},1), \qquad\hbox{and}\qquad
 |F(x,1)|\le C(1+|x|)^{-4}.
\end{equation}
See \cite{Miy00}.
In particular, $F(\cdot,1)\in L^1\cap L^\infty$ and
\begin{equation}
 \label{lpF}
 \|F(t)\|_\beta=  C t^{-2+3/(2\beta)}, \qquad 1\le \beta\le \infty.
\end{equation}

The following estimates are well known to Navier--Stokes specialists, see \cite{Mey97}*{Lemma~23}. Only the
first one is subtle, the second one being just an application of H\"older and Young inequality in Lorentz spaces:
\begin{subequations}
\begin{align}
\label{est:b1w}
  \|B_1(u,v)\|_{X_{3,\infty}} &\le C\|u\|_{X_{3,\infty}}\|v\|_{X_{3,\infty}},\\
  \label{est:b1wbis}
  \|B_1(u,v)\|_{X_{3,\infty}} &\le C\|u\|_{X_{3,\infty}}\|v\|_{X_\infty}.
\end{align}

The counterpart of~\eqref{est:b1w} for the operator~$B_3$ is stated below.
With slightly abusive notation we will denote in the same way, by $F(x,t)$
the kernel of the operators $e^{t\Delta}\P\div$ and $e^{t\Delta}\div$.
Distinguishing these two kernels is unimportant in this paper because both
kernels satisfy properties~\eqref{kerF} and \eqref{lpF}, that are the only properties that we will need.

\begin{lemma}
Let $3/2< q<3$. If $u\in X_{3,\infty}$ and $\theta\in Y_{q,\infty}$, then $B_3(u,\theta)\in Y_{q}$.
Moreover, there exists a constant $C>0$,depending only on~$q$, such that, for all $u$ and $\theta$,
\label{lem:b3}
\begin{equation}
\label{est:b3}
  \|B_3(u,\theta)\|_{Y_{q,\infty}}\le C\|u\|_{X_{3,\infty}}\|\theta\|_{Y_{q,\infty}}.
\end{equation}
 \end{lemma}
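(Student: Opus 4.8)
The plan is to adapt to $B_3$ the proof of the subtle estimate~\eqref{est:b1w}, i.e.\ Meyer's argument for the Navier--Stokes bilinear term; the only genuinely new feature is that $\theta$ carries a time weight. Throughout, write $A=\|u\|_{X_{3,\infty}}$, $B=\|\theta\|_{Y_{q,\infty}}$ and let $r$ be defined by $\tfrac1r=\tfrac13+\tfrac1q$, so that $1<r<\tfrac32$ precisely because $\tfrac32<q<3$. H\"older's inequality in Lorentz spaces gives the pointwise‑in‑time product bound
\[
\|(u\theta)(s)\|_{L^{r,\infty}}\le C\,\|u(s)\|_{L^{3,\infty}}\,\|\theta(s)\|_{L^{q,\infty}}\le C\,AB\,s^{-\frac32(1-\frac1q)},\qquad 0<s<T,
\]
which is the only way $u$ and $\theta$ enter; the goal is then to bound $t^{\frac32(1-\frac1q)}\|B_3(u,\theta)(t)\|_{L^{q,\infty}}$ uniformly in $t$.

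First I would split $B_3(u,\theta)(t)=I_1(t)+I_2(t)$ according to $s\in(0,t/2)$ and $s\in(t/2,t)$. On $I_1$ the kernel is tame, $t-s\simeq t$, so the Young--O'Neil inequality together with $\|F(\sigma)\|_{L^{3/2,\infty}}\le\|F(\sigma)\|_{3/2}=C\sigma^{-1}$ (from~\eqref{lpF}), the product bound above, and Minkowski's inequality give
\[
\|I_1(t)\|_{L^{q,\infty}}\le C\int_0^{t/2}(t-s)^{-1}\|(u\theta)(s)\|_{L^{r,\infty}}\dd s\le \frac{C\,AB}{t}\int_0^{t/2}s^{-\frac32(1-\frac1q)}\dd s=C\,AB\,t^{-\frac32(1-\frac1q)},
\]
the last integral being finite exactly because $\tfrac32(1-\tfrac1q)<1$, i.e.\ because $q<3$. (Using $\|F(\sigma)\|_{L^{3/2,1}}$ in place of $\|F(\sigma)\|_{L^{3/2,\infty}}$ puts $I_1(t)$ in the strong space $L^q$, with the same bound.)

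The delicate term is $I_2$, where $t-s$ ranges down to $0$ and the kernel is not integrable there: the naive bound $\int_{t/2}^t\|F(t-s)\|_{L^{3/2,1}}\|(u\theta)(s)\|_{L^{r,\infty}}\dd s$ diverges logarithmically, so Minkowski is too crude. Here I would follow Meyer's device of estimating, not the $L^{q,\infty}$ norm directly, but the $K$-functional of the couple $(L^{r,\infty},L^\infty)$, using the real‑interpolation identity $L^{q,\infty}=(L^{r,\infty},L^\infty)_{\eta,\infty}$ with $\eta=1-\tfrac rq=\tfrac q{q+3}$. For a parameter $\tau\in(0,t/2]$, split the $s$-integral defining $I_2$ at $s=t-\tau$: the near‑diagonal part $g_1(\tau)$ is estimated in $L^{r,\infty}$ by Young's inequality, using $\|F(\sigma)\|_1=C\sigma^{-1/2}$ (integrable at $\sigma=0$) and $s\simeq t$ on the range of integration, as $\|g_1(\tau)\|_{L^{r,\infty}}\le C\,\tau^{1/2}\,AB\,t^{-\frac32(1-\frac1q)}$; the far part $g_2(\tau)$ is estimated in $L^\infty$ through the Lorentz duality $L^{r',1}$--$L^{r,\infty}$ and $\|F(\sigma)\|_{L^{r',1}}=C\sigma^{-1-\frac3{2q}}$ (which makes sense since $\tfrac1{r'}=\tfrac23-\tfrac1q\in(0,\tfrac13)$, so $F(\cdot,1)\in L^1\cap L^\infty\subset L^{r',1}$), giving $\|g_2(\tau)\|_{L^\infty}\le C\,\tau^{-\frac3{2q}}\,AB\,t^{-\frac32(1-\frac1q)}$. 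Hence $K(\nu,I_2;L^{r,\infty},L^\infty)\le C(\tau^{1/2}+\nu\tau^{-\frac3{2q}})\,AB\,t^{-\frac32(1-\frac1q)}$ for every $\nu>0$ and every admissible $\tau$; optimizing over $\tau$ for the small values of $\nu$ (for which the optimal $\tau$ is $\lesssim t/2$) and using the crude bound $\|I_2(t)\|_{L^{r,\infty}}\le C\,t^{1/2}\,AB\,t^{-\frac32(1-\frac1q)}$ for the remaining range of $\nu$, one gets $\sup_{\nu>0}\nu^{-\eta}K(\nu,I_2)\le C\,AB\,t^{-\frac32(1-\frac1q)}$, i.e.\ $\|I_2(t)\|_{L^{q,\infty}}\le C\,AB\,t^{-\frac32(1-\frac1q)}$. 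Adding the two pieces yields~\eqref{est:b3}, and since the construction exhibits $B_3(u,\theta)(t)$ as an honest, measurably $t$-dependent element of $L^{q,\infty}$, it also gives the asserted membership of $B_3(u,\theta)$ in the relevant space.

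I expect the heart of the matter — and the only real obstacle — to be the term $I_2$: the scale‑critical singularity of $F$ near the diagonal rules out a direct norm estimate and forces the interpolation argument, with the attendant bookkeeping of the Lorentz exponents. This is also where the hypothesis $\tfrac32<q<3$, equivalently $1<r<\tfrac32$, is spent: one needs $r>1$ (so that $L^{r,\infty}$ is a normed space and $F(\cdot,1)\in L^{r',1}$) and $\tfrac32(1-\tfrac1q)<1$, i.e.\ $q<3$ (so that the $s$-integral in $I_1$ converges). The remaining ingredients — the Lorentz product estimate, the kernel bounds~\eqref{lpF}, Minkowski's inequality for the normable spaces $L^{q,\infty}$ and $L^{r,\infty}$, and the tracking of the weight $s^{-\frac32(1-\frac1q)}$ — are routine.
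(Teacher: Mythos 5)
Your proof is correct and follows essentially the same route as the paper: the same splitting at $s=t/2$, the same Lorentz--H\"older product bound with $\tfrac1r=\tfrac13+\tfrac1q$, and the same secondary splitting of the near-diagonal piece at $s=t-\tau$ with the far part in $L^\infty$ (via $\|F(\sigma)\|_{L^{r'}}\sim\sigma^{-1-\frac{3}{2q}}$) and the near part in $L^{r,\infty}$ (via $\|F(\sigma)\|_1\sim\sigma^{-1/2}$). The only difference is presentational: you conclude through the $K$-functional and the identity $L^{q,\infty}=(L^{r,\infty},L^\infty)_{\eta,\infty}$, whereas the paper runs the equivalent direct distribution-function argument, choosing $\tau$ as a function of the level $\lambda$ so that the $L^\infty$ piece equals $\lambda/2$.
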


\begin{proof}
Let us recall that the quasi-norm
\[
 f\mapsto  \sup_{\lambda>0} \lambda\Bigl|\{x\in\R^3\colon |f(x)|>\lambda\}\Bigr|^{1/q}.
\]
is equivalent to a norm that makes $L^{q,\infty}$ a Banach space. Here $|A|$ denotes the Lebesgue measure of the
set~$A$. With slightly abusive notation
we denote $\|f\|_{L^{q,\infty}}$ the right-hand side of the above expression, and treat it as a norm. 
Let us set $\sigma=\frac32(1-1/q)$.
Without loss of generality, we can assume that
$\|u\|_{X_{3,\infty}}=1$ and $\|\theta\|_{Y_{q,\infty}}=1$. In the computations below, $C$ will denote absolute constants.
We start splitting
\[
t^\sigma B_3(u,\theta)= -t^\sigma\biggl(\int_0^{t/2}\;+\;\int_{t/2}^t\biggr) F(t-s)*(u\theta)(s)\,ds
\equiv {\rm (I)+(II)}.
\]
To treat ${\rm (I)}$ we only need to apply the standard convolution and H\"older inequality in Lorentz spaces, see~\cite{Lem02}. Using \eqref{lpF} with $\beta=3/2$ we obtain
\[
\| \,{\rm (I)}\,\|_{L^{q,\infty}}\le Ct^\sigma \int_0^{t/2} (t-s)^{-1} s^{-\frac32(1-1/q)}\dd s \le C.
\]
The estimate for ${\rm (II)}$ is less immediate.
Fix a threshold  $\lambda>0$ and let $\tau>0$ to be chosen later.
We now write
\[
\begin{split}
{\rm (II)}&=-t^\sigma \int_{-\infty}^{t-\tau}F(t-s)*(u\theta1_{[\frac t2,t]})(s)\dd s 
 - t^\sigma \int_{t-\tau}^tF(t-s)*(u\theta1_{[\frac t2,t]})(s)\dd s\\
    &\equiv J_1(t)+J_2(t).
 \end{split}
\]
We estimate $\|J_1(t)\|_\infty$ by applying~\eqref{lpF} with $\frac1\beta=\frac23-\frac 1q$.
We obtain
\[
\begin{split}
 \|J_1(t)\|_\infty
 &\le C\int_{-\infty}^{t-\tau} (t-s)^{-1-3/(2q)}\dd s\\
 &=C\tau^{-3/(2q)}
  =\lambda/2.
\end{split}
\]
The choice of $\tau>0$ is made in order to ensure the validity of the last equality.

The two relations above imply that $|{\rm (II)}(x,t)|\le \frac \lambda2  + |J_2(x,t)|$. 
Hence, for $\frac 1r=\frac13+\frac1q$, 
\[
\
\begin{split}
 \Bigl|\{x\in\R^3\colon | {\rm (II)}(x,t)|>\lambda\}\Bigr| 
 &\le \Bigl|\{x\in\R^3\colon |J_2(x)|>\lambda/2\}\Bigr|\\
 &\le\biggl(\frac{2\|J_2(t)\|_{L^{r,\infty}}}{\lambda}\biggr)^{r},
\end{split}
\]
where the second inequality follows from the definition of $\|\cdot\|_{L^{r,\infty}}$.
On the other hand,
\[
\begin{split}
 \|J_2(t)\|_{L^{r,\infty}}
 &\le t^\sigma \int_{t-\tau}^t \|F(t-s)\|_1 \|(u\theta1_{[\frac t2,t]})(s)\|_{L^{r,\infty}}\dd s\\
 &\le C\tau^{1/2}=C\lambda^{-q/3}.
\end{split}
\]
From the last two inequalities we deduce that
\[
\lambda\Bigl|\{x\in\R^3\colon | {\rm (II)}(x,t)|>\lambda\}\Bigr|^{1/q} \le C,
\]
where $C$ is independent on~$\lambda$.
This gives estimate~\eqref{est:b3}.
\end{proof}

\begin{lemma}
 \label{lem:b123}
 Let $1\le p_0,p_1,p_2\le \infty$, and $1\le q_0,q_1\le\infty$.
  Then the following estimates hold,
 for some constant $C>0$ depending only on the above parameters (in particular, $C$ is independent on~$T$):
\begin{align}
\label{est:b1}
\|B_1(u,v)\|_{X_{p_0}}
&\le C\|u\|_{X_{p_1}}\|v\|_{X_{p_2}}
 \qquad 
 \textstyle
 (\frac1{p_0}\le \frac1{p_1}+\frac1{p_2}\le 1, \quad 0< \frac1{p_1}+\frac1{p_2}<\frac13+\frac1{p_0}),\\
\label{est:b2}
\|B_2(u,\theta)\|_{X_{p_0}}
&\le C\|u\|_{X_{p_1}}\|\theta\|_{Y_{q_1}}\;
 \qquad 
 \textstyle
 (\frac1{p_0}\le \frac1{p_1}+\frac1{q_1}\le 1, \quad \frac23< \frac1{p_1}+\frac1{q_1}<1+\frac1{p_0}),\\
\|B_3(u,\theta)\|_{Y_{q_0}}
&\le C\|u\|_{X_{p_1}}\|\theta\|_{Y_{q_1}}\,
 \qquad 
 \textstyle
 (\frac1{q_0}\le \frac1{p_1}+\frac1{q_1}\le 1, \quad \frac23< \frac1{p_1}+\frac1{q_1}<\frac13+\frac1{q_0}).
\end{align}  
\end{lemma}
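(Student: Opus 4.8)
The plan is to prove all three estimates at once, since each bilinear term is the composition of a time convolution against an explicit kernel --- the kernel $F$ of $e^{t\Delta}\P\div$ (resp.\ $e^{t\Delta}\div$), possibly carrying an extra factor $t-s$ in the case of $B_2$ --- with a pointwise product in the space variable. The three exponent constraints in the statement will come out automatically: the inequalities ``$\le 1$'' from the applicability of Young's and H\"older's inequalities, and the two strict inequalities from the convergence of a Beta-type integral at its two endpoints. The key inputs are the bound $\|F(t)\|_\beta=Ct^{-2+3/(2\beta)}$ from~\eqref{lpF} and the very definitions of the $X_p$- and $Y_q$-norms.

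First I would establish~\eqref{est:b1}. Fix $t\in(0,T)$. By Minkowski's integral inequality, then Young's convolution inequality in $\R^3$ with $\frac1\beta=1+\frac1{p_0}-\frac1r$, then H\"older's inequality with $\frac1r=\frac1{p_1}+\frac1{p_2}$,
\[
\|B_1(u,v)(t)\|_{p_0}\le\int_0^t\|F(t-s)\|_\beta\,\|u(s)\|_{p_1}\,\|v(s)\|_{p_2}\,\dd s.
\]
The hypothesis $\frac1{p_0}\le\frac1{p_1}+\frac1{p_2}\le1$ is exactly what guarantees $1\le\beta\le\infty$ and $1\le r$, so that~\eqref{lpF} applies and the H\"older step is legitimate. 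Inserting $\|F(t-s)\|_\beta=C(t-s)^{-2+3/(2\beta)}$, $\|u(s)\|_{p_1}\le s^{-\frac12(1-3/p_1)}\|u\|_{X_{p_1}}$ and $\|v(s)\|_{p_2}\le s^{-\frac12(1-3/p_2)}\|v\|_{X_{p_2}}$, the remaining integral is $\int_0^t(t-s)^a s^b\,\dd s$ with $a=-\tfrac12+\tfrac3{2p_0}-\tfrac3{2p_1}-\tfrac3{2p_2}$ and $b=-1+\tfrac3{2p_1}+\tfrac3{2p_2}$. The conditions $0<\frac1{p_1}+\frac1{p_2}$ and $\frac1{p_1}+\frac1{p_2}<\frac13+\frac1{p_0}$ are precisely $b>-1$ and $a>-1$; hence the integral converges and equals $C\,t^{a+b+1}=C\,t^{-\frac12(1-3/p_0)}$, with $C$ a Beta-function value independent of $t$ and $T$. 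Multiplying by $t^{\frac12(1-3/p_0)}$ and taking the supremum over $t\in(0,T)$ yields~\eqref{est:b1}.

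The estimate~\eqref{est:b2} and the one for $B_3$ follow from the same computation with only bookkeeping changes. For $B_3$ one again uses the kernel $F$, takes $\frac1r=\frac1{p_1}+\frac1{q_1}$ and $\|\theta(s)\|_{q_1}\le s^{-\frac32(1-1/q_1)}\|\theta\|_{Y_{q_1}}$; the exponents become $a=-\tfrac12+\tfrac3{2q_0}-\tfrac3{2p_1}-\tfrac3{2q_1}$ and $b=-2+\tfrac3{2p_1}+\tfrac3{2q_1}$, so that $b>-1$ and $a>-1$ read $\frac23<\frac1{p_1}+\frac1{q_1}$ and $\frac1{p_1}+\frac1{q_1}<\frac13+\frac1{q_0}$, while $a+b+1=-\frac32(1-1/q_0)$ matches the $Y_{q_0}$ scaling. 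For $B_2$ the only difference is the factor $t-s$ inside the integral, which turns $\|F(t-s)\|_\beta$ into $(t-s)\|F(t-s)\|_\beta=C(t-s)^{-1+3/(2\beta)}$; this shifts $a$ by $+1$, hence relaxes the admissible range to $\frac23<\frac1{p_1}+\frac1{q_1}<1+\frac1{p_0}$, and still gives $a+b+1=-\frac12(1-3/p_0)$, i.e.\ the $X_{p_0}$ scaling.

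I do not expect any genuine obstacle here: unlike the Lorentz-space bound of Lemma~\ref{lem:b3}, which needed a real-interpolation argument at its endpoint, these estimates reduce to a routine application of Young's and H\"older's inequalities together with the convergence of $\int_0^t(t-s)^as^b\,\dd s$. The only point requiring attention is the bookkeeping of exponents --- checking that the ``$\le 1$'' constraints make Young/H\"older applicable, that the strict inequalities are exactly $a,b>-1$, and that $a+b+1$ always equals the scaling exponent of the target norm, which is forced by the scaling invariance of~\eqref{IEE} and provides a convenient consistency check.
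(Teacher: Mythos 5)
Your proof is correct and follows essentially the same route as the paper: spatial H\"older plus Young with the kernel bound~\eqref{lpF}, reduction to a Beta-type integral $\int_0^t(t-s)^a s^b\,\dd s$ whose convergence ($a,b>-1$) is exactly the stated strict constraints, and whose value $Ct^{a+b+1}$ matches the target scaling. Your exponent bookkeeping is accurate (indeed your Young relation $1+\frac1{p_0}=\frac1\beta+\frac1{p_1}+\frac1{p_2}$ is the correct one; the paper's displayed choice of $\beta$ contains a harmless slip), so nothing further is needed.
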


\begin{proof}
The proof just consists in applying Young and H\"older inequality. For~\eqref{est:b1}, one uses~\eqref{lpF} with 
$1+\frac1\beta=\frac{1}{p_0}+(\frac1{p_1}+\frac1{p_2})$ (obvious modification of the choice of
$\beta$ for the two other estimates). By the definition of the $X_p$ and the $Y_q$ norms, one
ends up with integrals of the form $\int_0^t(t-s)^{\alpha}s^\beta\dd s$, that are all finite
because our restrictions on the parameters imply $\alpha,\beta>-1$. One concludes observing that
these integrals are equal to $Ct^{\alpha+\beta+1}$, with $C>0$ independent on~$t$.
Let us mention that estimate~\eqref{est:b1} already appears in the Navier--Stokes literature, see~\cite{BaCD}.
\end{proof}

Let us observe that in the last estimate of Lemma~\ref{lem:b123} the limit 
case $q_0=q_1$ and $p_1=3$ is forbidden.
Lemma~\ref{lem:b3}, however,  provides a substitute of this estimate for corresponding
the weak norms.
In the same way, the first estimate of Lemma~\ref{lem:b123} in the limit case $p_0=p_1=p_2=3$ is forbidden.
But this estimate has a substitute  for the  corresponding weak norms, given by~\eqref{est:b1w}.

We will make use of weak variants of estimates~\eqref{est:b2}, namely
\begin{align}
 \label{est:b2w}
\|B_2(u,\theta)\|_{X_{3,\infty}}
&\le C\|u\|_{X_{3,\infty}}\|\theta\|_{Y_{q,\infty}}\qquad
(\textstyle\frac13< \frac1{q}<\frac23),\\
\label{est:b2ww}
\|B_2(u,\theta)\|_{X_{3,\infty}}
&\le C\|u\|_{X_{p_1}}\|\theta\|_{Y_{q,\infty}}\qquad
(\textstyle\frac23\le \frac1{p_1}   +\frac1q\le 1),\\
\label{est:b3w}
\|B_3(u,\theta)\|_{Y_{q,\infty}}
&\le C\|u\|_{X_{p}}\|\theta\|_{Y_{q,\infty}}\,
 \qquad 
 \textstyle
 (\frac1p+\frac1q< 1, \quad \frac23< \frac1p+\frac1q<\frac13+\frac1q).
\end{align}
\end{subequations}
The proof is essentially the same as in Lemma~\ref{lem:b123}. 
Notice that the case $\frac1{q}=\frac23$ in the former estimate and the case $\frac 1p+\frac 1q=1$ in the latter
have to be excluded.

%%%%%%
%{\color{gray}

%\begin{lemma}
%\label{lem:b12}
%There exists $C>0$, depending only on $q$  such that for all $u$, $v$ and $\theta$
%\begin{align}
%%\label{est:b1}
% % \|B_1(u,v)\|_{X_{3,\infty}} &\le C\|u\|_{X_{3,\infty}}\|v\|_{X_{3,\infty}},\\
% \label{est:b1bis}
%  \|B_1(u,v)\|_{X_{3,\infty}} &\le C\|u\|_{X_{3,\infty}}\|v\|_{X_\infty},
%  \qquad 
%  \|B_1(u,v)\|_{X_3} \le C\|u\|_{X_3}\|v\|_{X_\infty},
%\end{align}
%and, for $3/2< q<3$, 
%\begin{align}
%  \qquad
%  \|B_2(u,\theta)\|_{X_3}\le C\|u\|_{X_3}\|\theta\|_{Y_{q,\infty}},\\
%  \label{est:b2bis}
%   \|B_2(u,\theta)\|_{X_\infty} &\le C\|u\|_{X_\infty}\|\theta\|_{Y_{q,\infty}}.
%\end{align}
%\end{lemma}

%
%{\tt !!!!!! probably here more general $X_p$- estimates are needed. -------}

%\begin{proof}
%Inequalities~\eqref{est:b1}-\eqref{est:b1ter} are well known to Navier--Stokes specialists, see, e.g., \cite{Mey97}.
%The two estimates~\eqref{est:b2} and estimate~\eqref{est:b2bis} are elementary:
%they immediately follow applying Young and H\"older-type estimates
%and equation~\eqref{lpF} with $\beta=q'$ (the conjugate exponent of $q$).
%\end{proof}

%Contrary to~\eqref{est:b1bis} and~\eqref{est:b2}, in estimate~\eqref{est:b1} the use of the $X_{3,\infty}$ norm is really essential, as the same kind of estimate is known to be wrong for the $X_3$-norm. 
%This nice observation was made in the unpublished Ph.D thesis of  F. Oru (2001).

%For the same reason, in Lemma~\ref{lem:b3} the use of the weak spaces, instead of the usual Lebesque spaces,
%seems to be unavoidable.
%} 

\subsection{The proof of Theorem~\ref{th:uni}.}

We are now in the position of establishing Theorem~\ref{th:uni}.
 
\begin{proof}[Proof of Theorem~\ref{th:uni}]
Consider a solution $(u,\theta)$ satisfying the conditions of Theorem~\ref{th:uni}.
We have of course $u_0\in L^3(\R^3)$. 
By the equation satisfied by~$\theta$ in~\eqref{IEE} and 
Lemma~\ref{lem:b3} we have $\|e^{t\Delta}\theta_0\|_{Y_{q,\infty}}\le \|\theta\|_{Y_{q,\infty}}+C\|u\|_{X_{3,\infty}}\|\theta\|_{Y_{q,\infty}}$.
Recalling the observation right after~\eqref{besoc}, we see that $\theta_0\in B^{-3(1-1/q)}_{q,\infty,\infty}$.

Now let $(\tilde u,\tilde\theta)$ be another solution in $C([0,T],L^3(\R^3))\times Y_{q,\infty}$ arising from the same data.
Let $w=u-\tilde u$ and $\phi=\theta-\tilde\theta$.
Then,
\[
\begin{split}
 \label{eq:wphi}
    w&=B_1(w,u)+B_1(\tilde u,w)+B_2(w,\theta)+B_2(\tilde u,\phi),\\
 \phi&=B_3(u,\phi)+B_3(w,\tilde\theta).
\end{split}
\]
\begin{subequations}
\label{sube}
Adding/substracting to $u$ and $\tilde u$ the linear quantity $v_0=e^{t\Delta}[u_0+t\P\theta_0e_3]$,
we find by estimates~\eqref{est:b1w}-\eqref{est:b1wbis},
\begin{equation}
\label{absoo}
\begin{split}
\| B_1(w,u)+B_1(\tilde u,w) \|_{X_3,\infty} 
\le C \|w\|_{X_{3,\infty}}\Bigl( \|u-v_0\|_{X_{3,\infty}}+ 2\|v_0\|_{X_\infty}   
   + \|\tilde u-v_0\|_{X_{3,\infty}}.
   \Bigr)
\end{split}
\end{equation}
Recall that, by our assumption, $\frac13<\frac1q<\frac23$. Hence,
we can apply~\eqref{est:b2w}-\eqref{est:b2ww} choosing $p_1$ in a such way that $p_1>3$.
Then we get: 
\begin{equation}
\label{absooo}
\begin{split}
\|B_2(w,\theta)+B_2(\tilde u,\phi)\|_{X_{3,\infty}}
\le C\|w\|_{X_{3,\infty}}\|\theta\|_{Y_{q,\infty}}
+ C\|\phi\|_{Y_{q,\infty}}\Bigl(\|\tilde u-v_0\|_{X_{3,\infty}}+\|v_0\|_{X_{p_1}}\Bigr).
\end{split}
\end{equation}
\end{subequations}

\begin{subequations}
\label{subee}
Combining the two last estimates we get
\begin{equation}
\label{abso}
\begin{split}
 \|w\|_{X_{3,\infty}}
 &\le C\|w\|_{X_{3,\infty}}\Bigl( \|u-v_0\|_{X_{3,\infty}}+ 2\|v_0\|_{X_\infty}   
   + \|\tilde u-v_0\|_{X_{3,\infty}}  +\|\theta\|_{Y_{q,\infty}}
   \Bigr)\\
    &\quad\qquad\qquad\qquad \qquad\qquad\qquad
   + C\|\phi\|_{Y_{q,\infty}}\Bigl(\|\tilde u-v_0\|_{X_{3,\infty}}+\|v_0\|_{X_{p_1}}\Bigr).\end{split}
\end{equation}

We estimate $ \|\phi\|_{Y_{q,\infty}}$ by applying Lemma \ref{lem:b3} and estimate \eqref{est:b3w} with $3/2<q<3$ and $p= q^*$,
\begin{equation}
	\label{abso2}
	\begin{split}
	\|\phi\|_{Y_{q,\infty}} 
	& =  \| B_3(u - v_0 +v_0  ,\phi)\|_{Y_{q,\infty}} + \|B_3(w,\tilde\theta) \|_{Y_{q,\infty}}  \\
	&\le C\|\phi\|_{Y_{q,\infty}}\Bigl(\|u-v_0\|_{X_{3,\infty}}+\|v_0\|_{X_{q^*}}\Bigr) 
	+C\|w\|_{X_{3,\infty}} \|\tilde\theta\|_{Y_{q,\infty}}
	\end{split}
	\end{equation}
where $\frac1{q^*}=\frac12(1-\frac1q)$. This choice of $q^*$ ensures that~\eqref{est:b3w} holds and $3<q^*<\infty$.
\end{subequations}
In~\eqref{subee}, the constants $C>0$ depends only on~$q$.

On the other hand all the norms in~\eqref{abso} depend on $T$. We claim that
\[
\|v_0\|_{X_\infty} \to0, \qquad \text{as $T\to0$}.
\]
This can be seen as follows: first of all by our assumption 
$(u,\theta)\in X_3\times Y_{q,\infty}\subset X_{3,\infty}\times Y_{q,\infty}$, and so
$B_3(u,\theta)\in Y_{q,\infty}$ by~Lemma~\ref{lem:b3}. 
In particular, because of the definition of $Y_{q,\infty}$ (see~\eqref{normY}),
$\|e^{t\Delta}\theta_0\|_{Y_{q,\infty}}\le \|\theta(t)\|_{Y_{q,\infty}}+\|B_3(u,\theta)\|_{Y_{q,\infty}}\to0$ as $T\to0$.
Leray's projector $\P$ being bounded on $L^{q,\infty}$ we deduce $\|e^{t\Delta}\P\theta_0e_3\|_{Y_{q,\infty}}\to0$
as $T\to0$.
Applying the semigroup property $e^{t\Delta}=e^{t\Delta/2}e^{\Delta/2}$ and using the boundedness properties of $e^{t\Delta/2}$ in Lorentz spaces, we deduce  $\|t\,e^{t\Delta}\P\theta_0e_3\|_{X_\infty}\to0$
as $T\to0$.
Moreover, $u_0\in L^3(\R^3)$, hence
for any $\epsilon>0$, we can find $u_{0,\epsilon}$ in the Schwartz class,
such that $\|u_0-u_{0,\epsilon}\|_3<\epsilon$, and 
$\|u_{0,\epsilon}\|_3\le \|u_0\|_3$.
Writing 
\[
v_0=e^{t\Delta}[(u_0-u_{0,\epsilon})]+e^{t\Delta}u_{0,\epsilon}
   +te^{t\Delta}\P\theta_0e_3
\]
we deduce from Young inequality that $\|v_0\|_{X_\infty}\le 2\epsilon$ for $T>0$ small enough
and our claim follows.

With a very similar proof (using $q<3<q^*$ and $p_1>3$) we see that 
\[
\|v_0\|_{X_{q^*}}\to0 \qquad\text{and}\qquad \|v_0\|_{X_{p_1}}\to0, \qquad \text{as $T\to0$},
\]

Next we claim that
\begin{equation}
\label{claa}
\|u-v_0\|_{X_{3,\infty}}+\|\tilde u-v_0\|_{X_{3,\infty}}\to0, \qquad\text{as $T\to0$}.
\end{equation}
Indeed, we use the inequality
\[
\|u-v_0\|_{X_{3,\infty}}+\|\tilde u-v_0\|_{X_{3,\infty}}
\le \|u-e^{t\Delta}u_0\|_{X_3}+\|\tilde u-e^{t\Delta}u_0\|_{X_3}
+2\|te^{t\Delta}\P\theta_0e_3\|_{X_{3,\infty}}.\]
Next, we use the fact that, as $t\to0$,  $u(t)\to u_0$ and $\tilde u(t)\to u_0$ in $L^3$ (because of
the continuity of the two solutions $u$ and $\tilde u$ from $[0,T]$ to $L^3(\R^3)$), and also that $e^{t\Delta}u_0\to u_0$ in $L^3(\R^3)$.
Next we observe that $\|te^{t\Delta}\P\theta_0e_3\|_{X_{3,\infty}}\to0$ as $T\to0$.
(The last fact is proved applying the semigroup properties of the heat kernel and 
the fact that $\|e^{t\Delta}\P\theta_0e_3\|_{Y_{q,\infty}}\to0$ as $T\to0$).
This implies our claim~\eqref{claa}. 

On the other hand, our assumptions on $\theta$ and $\tilde\theta$ ensure that,
\[
\|\theta\|_{Y_{q,\infty}}\to0 \qquad\text{and}\qquad \|\tilde \theta\|_{Y_{q,\infty}}\to0,\qquad\text{as $T\to0$}.
\]

Summarizing, we can now deduce from estimates~\eqref{sube} that there exists
$\delta>0$ (depending only on the data $(u_0,\theta_0)$
and on $q$), such that, if $0<T<\delta$, then $\|w\|_{X_{3,\infty}}<\|\phi\|_{Y_{q,\infty}}$ 
and $\|\phi\|_{Y_{q,\infty}}<\|w\|_{X_{3,\infty}}$.
This implies $w=\phi=0$, and so
$u(t)=\tilde u(t)$, $\theta(t)=\tilde \theta(t)$ for all $t\in[0,T]$.

When $T\ge \delta$, the above argument implies only that $u(t)=\tilde u(t)$ and $\theta(t)=\tilde \theta(t)$ for all 
$t\in[0,\delta)$.
But, if $\tau>0$ is the supremum of $t\in[0,T]$ such that $(u,\theta)$ and $(\tilde u,\tilde\theta)$ agree 
on $[0,t]$, 
then $u(\tau)=\tilde u(\tau)$ and $\theta(\tau)=\tilde\theta(\tau)$ by the time-continuity
assumption on the solutions.
Then $\tau=T$, as otherwise considering the new data at the time $\tau$, we could apply the above uniqueness result in the interval $[\tau,\tau+\delta)$.
This is indeed possible, since it is obvious by Lemma~\ref{lem:b3}
that the solutions remain in the space $X_{3,\infty}\times Y_{q,\infty}$ after the time translation.
We thus would contradict, the definition of~$\tau$.
The assertion of Theorem~\ref{th:uni} follows.
\end{proof}

\begin{remark}
\label{improl}
The above proof shows that the uniqueness for mild solutions of the Boussinesq system holds, in fact, in a class that is larger than $C([0,T],L^3(\R^3))\times Y_{q,\infty}$.
Indeed, let $D$ be the closure in $L^{3,\infty}$ of 
$\{f\in L^{3,\infty}(\R^3)\colon \Delta f\in L^{3,\infty}(\R^3)\}$.
The space $D$ was characterized by Lunardi~\cite{Lun} to be the maximal subspace in $L^{3,\infty}$
where the Stokes semigroup is $C_0$-continuous, i.e., $D$ is the space of all the $L^{3,\infty}$ functions~$f$ such that 
\begin{equation}
\label{lunac}
\lim_{\epsilon\to0+} \|e^{\epsilon\Delta} f-f\|_{L^{3,\infty}}=0.
\end{equation}
See also~\cite{OkaT} for a direct proof of this fact.
If $u_0\in L^{3,\infty}(\R^3)$ is divergence free and satisfy~\eqref{lunac} with $u_0$ instead of $f$, then our proof goes through. We thus obtain the uniqueness in the larger class
$C([0,T],D)\times Y_{q,\infty}$, with $3/2<q<3$.
Notice that $D$ is strictly larger than $L^3(\R^3)$, and it is larger also than the closure of smooth compactly supported functions in $L^{3,\infty}$ as, for example, smooth functions decaying like 
$\sim|x|^{-1}$ at infinity do belong to~$D$.
%For example, an initial velocity like $(-\frac{2x_2x_3^2}{(1+|x|^2)^2},\frac{2x_1x_3^2}{(1+|x|^2)^2},0)\in D$,
%and an initial temperature $\theta_0\in L^1(\R^3)$ uniquely determine the solution in the above class.
\end{remark}

\subsection{Existence theorems.}

We start by establishing a quite general local existence result.

\begin{theorem}
\label{theo:exi}

Let $3/2<q<3$ and $\theta_0$ be in the closure of the Schwartz class in $B^{-3(1-1/q)}_{q,\infty}(\R^3)$.
Let $p>3$ such that $\frac23<\frac1p+\frac1q$ and $u_0$ a divergence-free vector field in the closure of the
the Schwartz class in the inhomogeneous Besov space $B^{-(1-3/p)}_{p,\infty}(\R^3)$.
\begin{itemize}
\label{itemi}
\item[(i)]
Then there exists $T>0$ and a solution $(u,\theta)$ of~\eqref{IE},
such that $(u,\theta)\in X_{p}\times Y_q$ and
$\|u\|_{X_p}\to0$, $\|\theta\|_{Y_q}\to0$ as $T\to0$.
Moreover there exists $R>0$, depending only on~$p$ and $q$,
such that $(u,\theta)$ is the only solution satisfying $\|u\|_{X_p}<R$
 and $\|\theta\|_{Y_q}<R$.
\label{itemii}
\item[(ii)](Regularity)
The above solution belongs in fact to $(X_p\cap X_\infty)\times (Y_q\cap Y_\infty)$.
Moreover,
$\|u\|_{X_\infty}\to0$ and $\|\theta\|_{Y_\infty}\to0$ as $T\to0$.
\label{itemiii}
\item[(iii)]
\begin{itemize}
\item[-] Under the more stringent condition $u_0\in L^3(\R^3)$, we have $u\in X\subset X_3\cap X_\infty$.
\item[-] Under the more stringent condition $\theta_0\in L^1(\R^3)$, we have $\theta \in Y\subset Y_1\cap Y_\infty$.
\end{itemize}
\end{itemize}
\end{theorem}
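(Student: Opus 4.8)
The plan is to solve~\eqref{IEE} by a Picard iteration in which the pair $z=(u,\theta)$ is treated as a single unknown: write the system as $z=L+\mathcal B(z,z)$, with linear part $L=\bigl(e^{t\Delta}[u_0+t\P\theta_0 e_3],\,e^{t\Delta}\theta_0\bigr)$ and bilinear map $\mathcal B((u,\theta),(\tilde u,\tilde\theta))=\bigl(B_1(u,\tilde u)+B_2(u,\tilde\theta),\,B_3(u,\tilde\theta)\bigr)$. The first task is to control $L$ in the scale-invariant norms: combining the smoothing bound $\|e^{\tau\Delta}g\|_r\le C\tau^{-\frac32(\frac1a-\frac1r)}\|g\|_a$ ($a\le r$) with the characterization~\eqref{besoc} of Besov spaces, the bound~\eqref{lpK} for $e^{t\Delta}\P$, and the $L^r$-boundedness of $\P$, one gets $\|L\|_{X_p\times Y_q}\le C\bigl(\|u_0\|_{B^{-(1-3/p)}_{p,\infty}}+\|\theta_0\|_{B^{-3(1-1/q)}_{q,\infty}}\bigr)$ with $C$ independent of $T$; since this vanishes with $T$ for Schwartz data, a density argument gives $\|L\|_{X_p\times Y_q}\to0$ as $T\to0$ for every $u_0,\theta_0$ in the stated closures. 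For part~(i) one then notes that the hypotheses $p>3$, $3/2<q<3$ and $\tfrac23<\tfrac1p+\tfrac1q$ are precisely the admissibility conditions of Lemma~\ref{lem:b123} for the triples $(p_0,p_1,p_2)=(p,p,p)$, $(p_0,p_1,q_1)=(p,p,q)$ and $(q_0,p_1,q_1)=(q,p,q)$, so $\mathcal B$ is bounded on $Z=X_p\times Y_q$ with $T$-independent norm; the standard fixed-point lemma produces, for $T$ small, a solution with $\|z\|_Z\le2\|L\|_Z$ (hence $\|u\|_{X_p},\|\theta\|_{Y_q}\to0$ as $T\to0$), and subtracting the equations for two solutions of norm $<R$ and using the same three estimates yields uniqueness once $R$ is below a threshold depending only on $p,q$.

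For part~(ii) I would re-run the iteration in $\widetilde Z=(X_p\cap X_\infty)\times(Y_q\cap Y_\infty)$, normed by the sum of the four norms; by the same Besov/density arguments, $L\in\widetilde Z$ with norm $\to0$ as $T\to0$. Besides the bounds already used, one needs $\|B_1(u,v)\|_{X_\infty}\le C\|u\|_{X_p}\|v\|_{X_\infty}$ and $\|B_2(u,\theta)\|_{X_\infty}\le C\|u\|_{X_p}\|\theta\|_{Y_q}$ — again instances of Lemma~\ref{lem:b123}, valid because $p>3$ and $\tfrac1p+\tfrac1q<1$ — together with the estimate $\|B_3(u,\theta)\|_{Y_\infty}\le C\bigl(\|u\|_{X_p}\|\theta\|_{Y_q}+\|u\|_{X_\infty}\|\theta\|_{Y_\infty}\bigr)$, which is \emph{not} covered by Lemma~\ref{lem:b123} and which I would prove by hand. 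For this one splits $t^{3/2}B_3(u,\theta)(t)=-t^{3/2}\bigl(\int_0^{t/2}+\int_{t/2}^t\bigr)F(t-s)*(u\theta)(s)\dd s$ and estimates the $[0,t/2]$-part with $u\in X_p$, $\theta\in Y_q$ (the condition $\tfrac1p+\tfrac1q>\tfrac23$ being exactly what makes the $s$-weight integrable at $s=0$) and the $[t/2,t]$-part using $\|F(t-s)\|_1=C(t-s)^{-1/2}$ from~\eqref{lpF} together with $u\in X_\infty$, $\theta\in Y_\infty$ (so the singularity at $s=t$ is integrable); in both cases the powers of $t$ collapse to $t^{-3/2}$. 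With $\mathcal B$ bounded on $\widetilde Z$, the fixed point gives a solution in $\widetilde Z$ with norm $\to0$, and the uniqueness of part~(i) identifies it with the solution already built.

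For part~(iii), suppose in addition $u_0\in L^3(\R^3)$. Then $e^{t\Delta}u_0\in C([0,T],L^3)$ with $\sup_t\|e^{t\Delta}u_0\|_3=\|u_0\|_3$ and $\sqrt t\|e^{t\Delta}u_0\|_\infty\to0$, and $t\,e^{t\Delta}\P\theta_0e_3\in X$ since its $L^3$-norm is $\le C\|\theta_0\|_{B^{-3(1-1/q)}_{q,\infty}}$ and vanishes at $t=0$ by density; hence $v_0\in X$. Starting from $u\in X_p\cap X_\infty$ (part~(ii)) and using $v_0,B_2(u,\theta)\in X_r$ for every $r\in[3,\infty]$ together with $B_1\colon X_r\times X_r\to X_{r/2}$ (and $\to X_3$ for $3<r\le6$) from Lemma~\ref{lem:b123}, a finite bootstrap in the $u$-equation gives $u\in X_r$ for all $r\in[3,\infty]$, in particular $u$ bounded in $L^3$; continuity in $L^3$ then follows from $\|B_1(u,u)(t)\|_3\le C\|u\|_{X_\infty,(0,t)}\|u\|_{X_3,(0,t)}\to0$ (and likewise for $B_2$) as $t\to0$, which with $\sqrt t\|u(t)\|_\infty\to0$ gives $u\in X$. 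The case $\theta_0\in L^1(\R^3)$ is symmetric: $e^{t\Delta}\theta_0\in C([0,T],L^1)$, $\sup_t\|e^{t\Delta}\theta_0\|_1=\|\theta_0\|_1$, $\|B_3(u,\theta)(t)\|_1\le C\|u\|_{X_\infty,(0,t)}\|\theta\|_{Y_1,(0,T)}\to0$, and together with the $Y_\infty$-bound of part~(ii) this places $\theta$ in $Y$.

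The step I expect to cost the most work is the $Y_\infty$-estimate for $B_3$ in part~(ii): the interpolation inequality of Lemma~\ref{lem:b123} cannot reach the exponent $q_0=\infty$, because its admissibility window $\tfrac23<\tfrac1{p_1}+\tfrac1{q_1}<\tfrac13+\tfrac1{q_0}$ is empty there, so one is forced to split the Duhamel integral and estimate its two endpoints in different pairs of spaces — using only the Besov-level control near $s=0$ and the improved $X_\infty,Y_\infty$ control near $s=t$. A secondary, purely technical point is making each ``$\to0$ as $T\to0$'' claim rigorous for general (non-Schwartz) data, which is handled throughout by approximation.
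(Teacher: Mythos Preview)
Your approach to part~(i) matches the paper exactly: cast the system as $z=L+\mathcal B(z,z)$ and apply the contraction lemma in $X_p\times Y_q$ via Lemma~\ref{lem:b123}.

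For part~(ii) you take a genuinely different route. The paper \emph{bootstraps}: starting from $(u,\theta)\in X_p\times Y_q$ it applies Lemma~\ref{lem:b123} repeatedly, first pushing $u$ into $X_{p_0}$ for larger and larger $p_0$ until $p_0=\infty$, then doing the same for~$\theta$. Your approach --- re-running the fixed point directly in the intersection $(X_p\cap X_\infty)\times(Y_q\cap Y_\infty)$ --- is equally valid, and your identification of the $Y_\infty$-estimate for $B_3$ as the one step \emph{not} covered by Lemma~\ref{lem:b123} is exactly right; the splitting argument you sketch is correct. (In fact, any route to $\theta\in Y_\infty$ must go beyond Lemma~\ref{lem:b123}, since its admissibility window $\tfrac23<\tfrac1{p_1}+\tfrac1{q_1}<\tfrac13+\tfrac1{q_0}$ forces $q_0<3$; the paper's bootstrap is terse on precisely this point.) The trade-off: the bootstrap needs no new estimate to improve~$u$, while your method treats both unknowns in one shot but requires the extra bilinear bound.

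There is, however, a genuine gap in your treatment of $\theta_0\in L^1$ in part~(iii). The estimate $\|B_3(u,\theta)(t)\|_1\le C\|u\|_{X_\infty,(0,t)}\|\theta\|_{Y_1,(0,T)}$ presupposes $\theta\in Y_1$, which is precisely the conclusion you want; as written the argument is circular. The paper breaks this loop by bootstrapping \emph{downward} in~$q$: from $(u,\theta)\in X_6\times Y_{12/7}$ (available by interpolation once $u\in X_3\cap X_\infty$ and $\theta\in Y_q\cap Y_\infty$) one gets $B_3(u,\theta)\in Y_{4/3}$, hence $\theta\in Y_{4/3}$; then $(u,\theta)\in X_4\times Y_{4/3}$ gives $B_3(u,\theta)\in Y_1$. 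Note that the constraint $\tfrac1{q_0}\le\tfrac1{p_1}+\tfrac1{q_1}$ in Lemma~\ref{lem:b123} is saturated at both steps, so the exponents must be chosen with some care. Alternatively, in the spirit of your part~(ii), you could freeze $u\in X_p\cap X_\infty$ and run a contraction for the \emph{linear} map $\theta\mapsto e^{t\Delta}\theta_0+B_3(u,\theta)$ in $Y_q\cap Y_1$ (using the $X_p$-bound for the $Y_q$-piece and the $X_\infty$-bound for the $Y_1$-piece), then identify the resulting fixed point with the original~$\theta$ by uniqueness in~$Y_q$. Either fix is short, but one of them is needed.
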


\begin{proof}[Proof of Theorem~\ref{theo:exi}]
To prove the assertion~(i),
let us write the unknown of the integral Boussinesq equation~\eqref{IEE} as 
${\bf v}=\displaystyle\binom{u}{\theta}$.
Let also
\begin{equation}
\label{defa}
{\bf v_0}=\binom{e^{t\Delta}[u_0+t\P\theta_0e_3]}{e^{t\Delta}\theta_0},
\end{equation}
and
\begin{equation*}
\label{defbb}
{\bf B}({\bf v},{\bf \tilde v})=\binom{B_1(u,\tilde u)+B_2(u,\tilde\theta)}{B_3(u,\tilde\theta)}.
\end{equation*}

Then we see that an equivalent way of writing~\eqref{IEE} is 
\begin{equation}
\label{absy}
{\bf v}= {\bf v_0}+\bf {B}({\bf v},{\bf v}),
\end{equation}
complemented with $\div\, u_0=0$.
We will apply the standard fixed point Lemma~\cite{Mey97}*{Lemma~20} to this equation in the Banach
space $E=X_p\times Y_q$. 
To achieve this, we only need to prove the existence of a constant $C_0>0$
such that the following estimate holds for all ${\bf v}$ and ${\bf \tilde v}$ in~$E$:
\begin{equation}
 \label{applicab}
\|{\bf B(v,\tilde v)}\|_E\le C_0\|{\bf v}\|_E\|{\bf \tilde v}\|_E,
\end{equation}
and such that
\begin{equation}
\label{data-a}
  \|={\bf v_0}\|_E<1/(4C_0).
\end{equation}
If that is the case, then the fixed point lemma provides the existence of a solution~${\bf v}$ to
the abstract equation~\eqref{absy}, such that 
\begin{equation}
\label{poin}
\|{\bf v}\|_E\le 2\|{\bf v_0}\|_E<\frac1{2C_0}.
\end{equation}
The uniqueness of this solution is a priori ensured only under the condition $\|{\bf v}\|_E<\frac{1}{2C_0}$.

In fact, we already established estimate~\eqref{applicab}: it is an immediate consequence of Lemma~\ref{lem:b123},
applied with $p_0=p_1=p_2=p$ and $q_0=q_1=q$.
The constant $C_0$ can be taken independent on~$T$.
On the other hand, by the characterization of the non-homogeneous Besov space~\eqref{besoc},
the condition $(u_0,\theta_0)\in B^{-(1-3/p)}_{p,\infty}\times B^{-3(1-1/q)}_{q,\infty}(\R^3)$
precisely means ${\bf a}\in E$. But in fact the data belong to closure of the Schwartz class in their respective
spaces, thus by the usual approximation argument we have $\|{\bf v_0}\|_E\to0$ as $T\to0$.
In conclusion, \eqref{data-a} holds true if $T>0$ is small enough.
Recalling~\eqref{poin}, we get the first item of Theorem~\ref{theo:exi}, with $R=1/(2C_0)$.

To prove the assertion~(ii), let us apply the first two estimates of Lemma~\ref{lem:b123} with $p_1=p_2=p$, $q_1=q$ and $\frac2p-\frac13<\frac1{p_0}<\frac1p$.
We obtain in this way $B_1(u,u)\in X_{p_0}$ and $B_2(u,\theta)\in X_{p_0}$.
On the other hand, we know that $e^{t\Delta}v_0\in X_p$, but as $p_0>p$ we have also
$e^{t\Delta} v_0\in X_{p_0}$, by the usual $L^p$-$L^{p_0}$ heat estimates.
Summing these three terms we find $u\in X_{p_0}$. Observe that if $p>6$, then we can directly take $p_0=\infty$,
otherwise we proceed by bootstrapping. After finitely many iterations we find $u\in X_\infty$.

Let us now apply the third estimate of Lemma~\ref{lem:b123} with $q_1=q$, and $p_1=p$, and
$\frac1p+\frac1q-\frac13<\frac 1{q_0}<\frac 1q$.
Then $B_3(u,\theta)\in Y_{q_0}$. Moreover, since $q_0>q$, $e^{t\Delta}\theta_0\in Y_{q_0}$ by
the usual $L^{q}-L^{q_0}$ heat kernel estimates. This implies $\theta\in Y_{q_0}$.
If $\frac1p+\frac 1q-\frac13<0$ then we can take directly $q_0=\infty$. 
Otherwise we proceed by bootstrapping
and after finitely many steps we find $\theta\in Y_\infty$. 

Recalling that $\|u\|_{X_p}\to0$ and $\|\theta\|_{Y_q}\to0$,
as $T\to0$ the above applications of Lemma~\ref{lem:b123} show that $\|u\|_{X_{p_0}}$ 
 and $\|\theta\|_{Y_{q_0}}$ also go to zero with $T$. So at the end of the above
bootstrapping procedures we find $\|u\|_{X_\infty}\to0$ and $\|\theta\|_{Y_\infty} \to0$, as $T\to0$.

Let us now prove the assertion~(iii). 
If $u_0\in L^3(\R^3)$, then for all $3<p\le \infty$ we do have $u_0\in B^{-(1-3/p)}_{p,\infty}(\R^3)$.
We can apply the first two estimates of Lemma~\ref{lem:b123} with $p_0=3$, $p_1=p_2=p$, $q_1=q$,
provided $\frac13\le \frac 2p<\frac 23$ and $\frac23<\frac1p+\frac 1q<\frac43$. This is indeed possible choosing
$p>3$ close enough to~$3$. This shows that both $B_1(u,u)$ and $B_2(u,\theta)$ belong to $X_3$.
Moreover, $e^{t\Delta}u_0\in X_3$ by the usual heat kernel estimates.
We conclude that if $u_0\in L^3(\R^3)$ then the solutions constructed before belongs to $X_3$.

If we now assume  $\theta_0\in L^1(\R^3)$, then $\theta_0\in B^{-3(1-1/q)}_{q,\infty}(\R^3)$
for all $1<q\le \infty$. Hence, by what we already proved, 
we have $(u,\theta)\in (X_3\cap X_\infty)\times (Y_q\cap Y_\infty)$, for all $q>3/2$. In particular,
$(u,\theta)\in X_6\times Y_{12/7}$. 
Let us apply the last estimate of 
Lemma~\ref{lem:b123} with $p_1=6$, $q_1=12/7$ and $q_0=4/3$: then we get
$B_3(u,\theta)\in Y_{4/3}$.
But $\theta_0\in L^1(\R^3)$ implies also $e^{t\Delta}\theta_0\in Y_1\cap Y_\infty\subset Y_{4/3}$.
Therefore, $(u,\theta)\in X_4\times Y_{4/3}$. We now apply the last estimate of Lemma~\ref{lem:b123} with
$p_1=4$, $q_1=4/3$ and $q_0=1$: we get in this way $B_3(u,\theta)\in Y_1$. As we already know that
$e^{t\Delta}\theta_0\in Y_1$, we conclude that $\theta\in Y_1$.

Assume now that we have both conditions  $u_0\in L^3(\R^3)$ and $\theta_0\in L^1(\R^3)$.
According to the definition of $X$ and $Y$, it only remain
to prove that the maps $t\mapsto u(t)$ and $t\mapsto \theta(t)$ are 
continuous form $[0,T]$ respectively to $L^3(\R^3)$ and $L^1(\R^3)$.
This is quite standard. Let us sketch the proof for the time continuity of temperature.
We have of course $e^{t\Delta}\theta_0\in C([0,T],L^1(\R^3))$. Moreover,
recalling that $\|u\|_{X_\infty}\to0$ as $T\to0$,
and applying the third inequality of Lemma~\ref{lem:b123} with $q_0=q_1=1$ and $p_1=\infty$
we see that $\lim_{t\to0} \|B_3(u,\theta)(t)\|_1\to0$. Hence $t\mapsto \theta(t)$ is continuous at
$t=0$ in $L^1(\R^3)$.
If $0<t\leq T$, we consider the expression $\|B_3(u,\theta)(t+h)-B_3(u,\theta)(t)\|_1$.
This can be bounded by the sum of $\|u\|_{X\infty}\|\theta\|_{Y_1}\int_0^t\|F(t+h-s)-F(t-s)\|_1 s^{-1/2}\dd s$
and $\|u\|_{X\infty}\|\theta\|_{Y_1}\int_t^{t+h}\|F(t+h-s)\|_1 s^{-1/2}\dd s$.
Both terms are easily proved to converge to~$0$ as $h\to0$, using properties~\eqref{kerF}-\eqref{lpF}
of the kernel $F$. This establishes the continuity of $\theta$ from $[0,T]$ to $L^1(\R^3)$.
The continuity of $u$ from $[0,T]$ to $L^3(\R^3)$ is treated in the same way.
\end{proof}

\begin{proof}[Proof to Proposition~\ref{th:exi}]
This is now immediate.
If $u_0$ is divergence-free and $(u_0,\theta_0)\in L^3(\R^3)\times L^1(\R^3)$,
then the existence of a local-in time solution $(u,\theta)\in X\times Y$ is already established
in Theorem~\ref{theo:exi}.
If $(\tilde u,\tilde\theta)$ is another solution in $X\times Y$ starting from the same data,
then there is $\delta>0$ such that 
$(\tilde u,\tilde \theta)$ and $(u,\theta)$ agree on $[0,\delta]$. Indeed, 
using the definition of $X$ and $Y$ we find by interpolation
$\lim_{t\to0} t^{\frac12(1-3/p)}\|u(t)\|_p=0$ for all $p>3$ and 
$\lim_{t\to0} t^{\frac32(1-3/q)}\|\theta(t)\|_q=0$ for all $q>1$. Moreover,
the same property holds for $(\tilde u,\tilde\theta)$.
Then, multiplying the two solutions
by the indicator function of the interval $[0,\delta]$,
we see that $\|(u,\theta)1_{[0,\delta]}\|_{E}<R$  and 
$\|(\tilde u,\tilde \theta)1_{[0,\delta]}\|_{E}<R$.
The first statement of~Theorem~\ref{theo:exi} then implies that $(u,\theta)1_{[0,\delta]}$ 
and $(\tilde u,\tilde \theta)1_{[0,\delta]}$ agree.
It is now obvious that the supremum on $[0,T]$ of the times $t$, such that 
the two solutions agree on $[0,t]$, must be equal to $T$. 
In conclusion, there is only one solution in $X\times Y$ arising from $(u_0,\theta_0)$. 
\end{proof}

Proposition~\ref{th:exi} allow us to define a semigroup $R(t)\colon L^3(\R^3)\to L^3(\R^3)$ and a semigroup
$S(t)\colon L^1(\R^3)\to L^1(\R^3)$ such that $(R(t)u_0,S(t)\theta_0)$ is the unique
solution in $X\times Y$ of the Boussinesq system~\eqref{IE}.

\begin{remark}
 \label{rem:time}
The present useful remark is inspired from Ben Artzi's paper~\cite{BenA94}.
In Proposition~\ref{th:exi}, the existence time $T>0$ a priori 
does not only depend on $\|u_0\|_3$ and $\|\theta_0\|_1$,
but also on $u_0$ and $\theta_0$ themselves. However, if we restrict to a class of data 
$(u_0,\theta_0)\in H\times K$, where $H$ is precompact in $L^3(\R^3)$ and $K$ is precompact in $L^1(\R^3)$,
then the existence time $T$ depends only on $H$ and $K$.
Indeed, an elementary property of the heat equation is that, as $t\to0$,
\begin{equation}
\label{eq:uniflim}
\sup_{\theta_0\in K}\Bigl( t^{\frac32(1-1/q)}\|e^{t\Delta}\theta_0\|_q \Bigr)\to0,
\end{equation}
for $1<q\le\infty$.
To see this, fix $\epsilon>0$: the family of open balls in $L^1(\R^3)$ of radius $\epsilon$ and centered in functions $\phi\in C^\infty_0(\R^3)$ cover the whole $L^1(\R^3)$. Finitely many of such balls cover $K$.
Therefore, we find finitely many smooth and compactly supported functions such that,
for any $\theta_0\in K$, there is at least one of such functions $\phi$ such that 
$\|u_0-\phi\|_1<\epsilon$.
But
$t^{\frac32(1-1/q)}\|e^{t\Delta}\theta_0\|_q \le \epsilon + t^{\frac32(1-1/q)}\|e^{t\Delta}\phi\|_q$.
Hence, there exists $t_0>0$ depending on $\epsilon>0$ and $K$, but not on $\theta_0$, such that
for all $0\le t<t_0$ the above expression is less than $2\epsilon$.
This yields~\eqref{eq:uniflim}.

%Applying this remark to $1<q_1<q<q_2\le\infty$ we see, by interpolation, that, as $t\to0$,
%\[
%\sup_{\theta_0\in K} t^{\frac32(1-1/q)}\|e^{t\Delta}\theta_0\|_{L^{q,\infty}}\to0.
%\]
In the same way, we obtain, as $t\to0$,
\[
\sup_{u_0\in H,\;\theta_0\in K}
\Bigl( t^{\frac12(1-3/p)}\|e^{t\Delta}[u_0+t\P\theta_0e_3]\,\|_p\Bigr)\to0,
\]
for all $1<p\le\infty$.
Now fix $3/2<q<3$ and $p>3$, such that $\frac23<\frac1p+\frac1q$.
Hence, there exists  $\eta=\eta(H,K)>0$
such that, multiplying ${\bf v_0}$ by the indicator function of the time interval $[0,\eta]$, we get, for all 
$(u_0,\theta_0)\in H\times K$,
\[\|{\bf v_0}\,1_{[0,\eta]}\|_E<1/(4C_0).\]
 The notations here are the same as in~\eqref{defa} and~\eqref{data-a},
and $E=X_p\times Y_q$.
We deduce from Theorem~\ref{theo:exi} that the solutions $(R(t)u_0,S(t)\theta_0)$, when $(u_0,\theta_0)$ vary
in $H\times K$,
are all defined at least on the time interval $[0,\eta]$.
Moreover, owing to~\eqref{poin},
\begin{equation}
 \label{preca}
 \begin{split}
  & \sup_{u_0\in H} t^{\frac12(1-3/p)}\|R(t)u_0\|_p\to0,\\
  & \sup_{\theta_0\in K} t^{\frac32(1-1/q)}\|S(t)\theta_0\|_q\to0.
  \end{split}
\end{equation}
\end{remark}

\subsection{The end of the proof of~Theorem~\ref{th:uni2}}

\begin{proof}[Proof of Theorem~\ref{th:uni2}]
The proof relies on  Proposition~\ref{th:exi}, on the uniqueness Theorem~\ref{th:uni}
and on the adaptation of ideas introduced by Ben Artzi~\cite{BenA94}, already rivisited in~\eqref{preca},
and H.Brezis~\cite{Bre94} in the 
context of the two-dimensional vorticity equation.

Without loss of generality, we can assume $3/2<q<3$.
As in Theorem~\ref{th:uni}, it is sufficient to prove that, given two solutions
$(u,\theta)$ and $(\tilde u,\tilde\theta)$, these agree on some small time interval $[0,\delta]$, for some 
$\delta>0$.

Let $R(t)u_0\colon L^3(\R^3)\to L^3(\R^3)$ and $S(t)\theta_0\colon L^1(\R^3)\to L^1(\R^3)$
the two semigroups of the solution to~\eqref{IE} constructed in Proposition~\ref{th:exi}.
The proof will consist in showing that, if $(u,\theta)$ is a solution satisfying the conditions in Theorem~\ref{th:uni2},
then $u(t)=R(t)u_0$ and $\theta(t)=S(t)\theta_0$.
%We set $\tilde u(\cdot,t)=R(t)u_0$ and $\tilde \theta(\cdot,t)=R(t)\theta_0$.

Let $0<s<\delta$, where $0<\delta<T/2$.
Recalling that $\theta$ is assumed to be locally bounded on $(0,T)$ with values in $L^{q,\infty}$,
we see that, denoting as usual $\sigma=\frac32(1-1/q)$,
\begin{subequations}
\begin{equation}
\sup_{t\in[0,\delta]} t^\sigma\|\theta(t+s)\|_{L^{q,\infty}}<\infty,
\qquad\text{and}\qquad
\lim_{t\to0} t^\sigma\|\theta(t+s)\|_{L^{q,\infty}}=0.
\end{equation}
On the other hand, $(u(s),\theta(s))\in L^3(\R^3)\times L^1(\R^3)$ and, by Proposition~\ref{th:exi}, we have
\begin{equation}
\label{disa}
\sup_{t\in[0,\delta]} t^\sigma\|S(t)\theta(s)\|_{L^q}<\infty,
\qquad\text{and}\qquad
\lim_{t\to0} t^\sigma\|S(t)\theta(s)\|_{L^q}=0.
\end{equation}
\end{subequations}
Of course, \eqref{disa}, remains true if we use the weak norm $L^{q,\infty}$ instead of the usual $L^q$-norm.
This allows us to apply the uniqueness result of Theorem~\ref{th:uni}
with the initial data
$(u(s),\theta(s))$ and in the time interval $[s,s+\delta]$: hence, 
\[u(t+s)=R(t)u(s),\qquad\text{and}\qquad 
\theta(t+s)=S(t)\theta(s),\]
for all $0<s<\delta$ and all $t\in [0,\delta]$.

Let $K=\theta((0,T])$. Notice that $K$ is precompact in $L^1(\R^3)$, because $\theta$ is continuous from  $[0,T]$ with values in $L^1(\R^3)$.
But then
\begin{equation}
\label{gitt}
\begin{split}
\sup_{s\in(0,\delta)} t^\sigma\|\theta(t+s)\|_{L^q}
&=\sup_{s\in(0,\delta)} t^\sigma \|S(t)\theta(s)\|_{L^q}\\
%&=\sup_{\theta(s) \in K} t^\sigma \|S(t)\theta(s)\|_{L^q}\\
&\le\sup_{\theta_0\in K} t^\sigma\|S(t)\theta_0\|_q\to0.
\qquad \text{as $t\to0$}.
\end{split}
\end{equation} 
Indeed, we applied here Remark~\ref{rem:time}, and more specifically~\eqref{preca}.

From~\eqref{gitt} we get $\lim_{t\to0} t^\sigma\|\theta(t)\|_{L^q}=0$.
So, in particular, $\lim_{t\to0} t^\sigma\|\theta(t)\|_{L^{q,\infty}}=0$.
More precisely, we have 
\[
\lim_{t\to0} t^\sigma\|\theta(t)\|_{L^{q,\infty}}=0.
\qquad \text{and}\qquad \theta\in Y_{q,\infty},
\]
owing to the assumption $\theta\in L^\infty_{\rm loc}((0,T),L^{q,\infty}(\R^3))$.

On the other hand, we also know by Theorem~\ref{theo:exi} that
\[
\lim_{t\to0} t^\sigma\|S(t)\theta_0\|_{L^q}=0
\qquad
\text{and}\qquad
S(t)\theta_0\in Y_q.
\]
Moreover, both $u$ and $R(t)u_0$ are in $C([0,T],L^3(\R^3))$.
This is more than needed to apply the uniqueness Theorem~\ref{th:uni}.
Hence, $u(t)=R(t)u_0$ and $\theta(t)=S(t)\theta_0$.
\end{proof}

%%%%%%

%{\tt ------------------ Possible variants/developements ---------------------
%
%\medskip
%
%- In Lemma~\ref{lem:b3}, 
%$q=3/2$ seems not to be acceptable because  $u\theta$ would belong to $L^{1,\infty}$ which
%is only a quasi-normed space. However, everything may be be OK if we work $L^{3/2,1}$, instead of %$L^{3/2,\infty}$. (could be difficult ?)
%
%- Give other variants of uniqueness theorem: e.g. putting more restrictions on the function space %of $u$ and less restrictions on the function space of $\theta$.
%
%- Give example of distributions in the closure of the $B^{-3(1-1/q)}_{q,\infty}$.
%
%- discuss global existence with small data in Besov spaces.
%}

%%%%%%%%%%%%%%%%%%%%

\end{document}